\documentclass[11pt,a4paper,english]{article}
\usepackage{amssymb}
\usepackage{amsfonts}
\usepackage[all]{xy}
\usepackage{amsfonts,amscd,amssymb,amsthm}
\usepackage{amsfonts,amssymb,amsmath,amsxtra,amscd,amsthm,eucal,graphicx,graphics}
\usepackage{tikz}
\usepackage{enumerate}
 \usepackage[T1]{fontenc}
\usepackage[utf8]{inputenc}
\usepackage{babel}
\usepackage{listings}
\newcommand{\overbar}[1]{\mkern 1.5mu\overline{\mkern-1.5mu#1\mkern-1.5mu}\mkern 1.5mu}

    \oddsidemargin  0.1in
    \evensidemargin 0.1in
    \textwidth      6.0in
    \headheight     0.0in
    \topmargin      0.0in
    \textheight=8.0in

\theoremstyle{definition}
\newtheorem{theorem}{Theorem}[section]
\newtheorem{remark}[theorem]{Remark}
\newtheorem{proposition}[theorem]{Proposition}
\newtheorem{lemma}[theorem]{Lemma}
\newtheorem{corollary}[theorem]{Corollary}
\newtheorem{definition}[theorem]{Definition}
\newtheorem{conjecture}[theorem]{Conjecture}

\newtheorem{question}[theorem]{Question}
\newtheorem{example}[theorem]{Example}

 \title{Restraints Permitting the Largest Number of Colourings}
    \author{Jason Brown \and Aysel Erey\footnote{Corresponding author, e-mail: aysel.erey@gmail.com}}
    \date{Department of Mathematics and Statistics\\ Dalhousie University \\ Halifax, Nova Scotia, Canada B3H 3J5 \\[\baselineskip] \today }

\begin{document}

\maketitle

\begin{abstract}
A \textit{restraint} $r$ on $G$ is a function which assigns each vertex $v$ of $G$ a finite set of forbidden colours $r(v)$. A proper colouring $c$ of $G$ is said to be \textit{permitted by the restraint r} if $c(v)\notin r(v)$ for every vertex $v$ of $G$.  A restraint $r$ on a graph $G$ with $n$ vertices  is called a \textit{$k$-restraint} if $|r(v)|=k$ and $r(v) \subseteq \{1,2,\dots ,kn\}$ for every vertex $v$ of $G$. In this article we discuss the following problem: among all $k$-restraints $r$ on $G$, which restraints permit the largest number of $x$-colourings for all large enough $x$? We determine such extremal restraints for all bipartite graphs.
\end{abstract}

\thanks{\textit{Keywords}: graph colouring, restraint, chromatic polynomial, restrained chromatic polynomial, bipartite graph }

\section{Introduction}

In a number of applications of graph colourings, constraints on the colour sets naturally play a role. For example, when one sequentially colours the vertices of a graph under a variety of algorithms, lists of forbidden colours dynamically grow at each vertex as neighbours are coloured. In scheduling and timetable problems, individual preferences may constrain the allowable colours at each vertex (cf.\ \cite{kubale}). There is the well-established and well-studied problem (see, for example, \cite{alon}, \cite{chartrand}, Section 9.2 and \cite{tuza}) of {\em list colourings}, where one has available at each vertex $v$ a list $L(v)$ of possible colours, which is equivalent to the remaining colours being forbidden at the node. 

In all these applications, for each vertex $v$ we have a finite list of {\em forbidden} colours $r(v) \subset {\mathbb N}$, and we call the function $r$ a {\em restraint} on the graph $G$; the goal is to colour the graph subject to the restraint placed on the vertex set. More specifically, a proper $k$-colouring $c$ of $G$ is {\em permitted} by restraint $r$ if $c(v) \not\in r(v)$ for all vertices of $v$ of $G$. The question that is often asked is whether there is a proper colouring that is permitted by a specific restraint. Although the ability to find, for a $k$-chromatic graph $G$ and for all non-constant restraints $r:V(G) \rightarrow \{1,2,\ldots,k\}$, a $k$-colouring permitted by $r$ has been used in the construction of critical graphs (with respect to colourings) \cite{toft} and in the study of some other related concepts \cite{amenable,roberts}. Our aim in this paper is to more fully investigate the {\em number} of colourings permitted by a given restraint.\textsl{}


To begin, we shall need a few definitions.  Let $G$ be a graph on $n$ vertices. A \textit{proper x-colouring} of $G$ is a function $f: V(G)\rightarrow \{1, 2, \dots ,x\}$ such that $f(u)\neq f(v)$ for every $uv\in E(G)$.  We say that $r$ is a {\em $k$-restraint} on $G$ if  $|r(u)| = k$ and $r(u)\subseteq \{1,2,\dots ,kn\}$ for every $u\in V(G)$. If $k = 1$ (that is, we forbid exactly one colour at each vertex) we omit $k$ from the notation and use the word {\em simple} when discussing such restraints. If the vertices of $G$ are ordered as $v_1,v_2\dots v_n$, then we usually write $r$ in the form $[r(v_1),r(v_2)\dots ,r(v_n)]$, and when drawing a graph, we label each vertex with its list of restrained colours.



Given a restraint $r$ on a graph $G$, the {\em restrained chromatic polynomial} of $G$ with respect to $r$, denoted by $\pi_{r}(G,x)$, is defined as the number of $x$-colourings permitted by restraint $r$ \cite{extremalaysel}. Note that this function extends the definition of chromatic polynomial, $\pi(G,x)$ because if $r(v) = \emptyset$ for every vertex $v$, then  $\pi_r(G,x) = \pi(G,x)$. Furthermore, it turns out that $\pi_{r}(G,x)$ is a polynomial function of $x$ when $x$ is large enough \cite{extremalaysel}.\\

Our focus will be  on the following question:

\begin{question} Given a graph $G$ and a natural number $k$, among all  $k$-restraints on $G$ what restraints permit the largest/smallest number of $x$-colourings for all large enough $x$?
\end{question}

Since $\pi_r(G,x)$ is a polynomial function of $x$, it is clear that such extremal restraints always exist for all graphs $G$. Let $R_{\operatorname{max}}(G,k)$ (resp. $R_{\operatorname{min}}(G,k)$) be the set of extremal $k$-restraints on $G$ permitting the largest (resp. smallest) number of colourings for sufficiently large number of colours. \\
In  this article, we  first give a complete answer to the minimization part of this question, by determining  $R_{\operatorname{min}}(G,k)$ for all graphs $G$ (Corollary~\ref{constantrestraintcorollary}). We then turn our attention to the more difficult maximization problem. We give two necessary conditions for a restraint to be in $R_{\operatorname{max}}(G,k)$ for every graph $G$ (Theorem~\ref{neccndtn2}), and we show that these necessary conditions are sufficient to determine $R_{\operatorname{max}}(G,k)$ when $G$ is a bipartite graph (Corollary~\ref{bipartiterestraintcorollary}).


 \section{Preliminaries}
 
Similar to the chromatic polynomial, the restrained chromatic polynomial also satisfies an edge deletion-contraction formula. Recall that $G\cdot uv$ is the graph formed from $G$ by contracting edge $uv$, that is, by identifying the vertices $u$ and $v$ (and taking the underlying simple graph).

\begin{lemma}[Edge Deletion-Contraction Formula]\cite{extremalaysel}\label{restcontrac} Let $r$ be any restraint on G, and $uv\in E(G)$. Suppose that $u$ and $v$ are replaced by $w$ in the contraction $G\cdot  uv$. Then
$$\pi_r(G,x)=\pi_r(G-uv,x)-\pi_{r_{uv}}(G\cdot uv,x)$$
where   \[r_{uv}(a) = \left\{ \begin{array}{ll}
                      r(a) & \mbox{ if  $a\neq w$}\\
                      r(u)\cup r(v)       & \mbox{ if $a=w$ }
                       \end{array} \right. \]
 \textit{for each } $a\in V(G\cdot uv).$
\end{lemma}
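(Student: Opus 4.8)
The plan is to mirror the classical deletion--contraction argument for the ordinary chromatic polynomial, but carrying the restraint along throughout. The key observation is that $G-uv$ and $G$ have the same vertex set and the same restraint $r$, and differ only in the single edge $uv$; so I would partition the set of $x$-colourings of $G-uv$ permitted by $r$ into two classes according to the colours received by the endpoints of the deleted edge: those colourings $c$ with $c(u)\neq c(v)$, and those with $c(u)=c(v)$. The strategy is to identify the first class with the colourings counted by $\pi_r(G,x)$ and the second class with those counted by $\pi_{r_{uv}}(G\cdot uv,x)$, which immediately yields $\pi_r(G-uv,x)=\pi_r(G,x)+\pi_{r_{uv}}(G\cdot uv,x)$ and hence the claimed formula after rearranging. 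Note that this is a genuine counting identity valid for every positive integer $x$, independent of the fact (established in \cite{extremalaysel}) that $\pi_r$ eventually agrees with a polynomial.

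For the first class, if $c$ is a proper colouring of $G-uv$ permitted by $r$ with $c(u)\neq c(v)$, then $c$ is already proper as a colouring of $G$, since the only edge of $G$ absent from $G-uv$ is $uv$, and its endpoints receive distinct colours; moreover $c$ is still permitted by the same restraint $r$. Conversely every colouring of $G$ permitted by $r$ has $c(u)\neq c(v)$ and restricts to such a colouring of $G-uv$. This gives a bijection, so the first class has size $\pi_r(G,x)$. For the second class I would construct the bijection with $r_{uv}$-permitted colourings of $G\cdot uv$ explicitly: send $c$ (with $c(u)=c(v)=\alpha$) to the colouring $c'$ of $G\cdot uv$ defined by $c'(w)=\alpha$ and $c'(a)=c(a)$ for $a\neq w$, with inverse obtained by setting $c(u)=c(v)=c'(w)$ and copying elsewhere.

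The hard part, though routine, is the restraint bookkeeping under contraction, and it is exactly where the definition of $r_{uv}$ is forced. On the one hand, $c'$ is proper on $G\cdot uv$ because the neighbours of $w$ are precisely the neighbours of $u$ together with those of $v$ (after passing to the underlying simple graph), and $c$ was proper at both $u$ and $v$ in $G-uv$; the lift back to $G-uv$ is proper by the symmetric check. On the other hand, $c$ being permitted by $r$ forces $\alpha\notin r(u)$ and $\alpha\notin r(v)$, i.e.\ $\alpha\notin r(u)\cup r(v)$, which is exactly the condition $c'(w)\notin r_{uv}(w)$; conversely $r_{uv}(w)=r(u)\cup r(v)$ guarantees that any $r_{uv}$-permitted value at $w$ lifts to a value avoiding both $r(u)$ and $r(v)$. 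At all other vertices $r_{uv}$ agrees with $r$, so the permitted condition transfers verbatim. Verifying that these two maps are mutually inverse then shows the second class has size $\pi_{r_{uv}}(G\cdot uv,x)$, completing the proof.
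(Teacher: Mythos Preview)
Your argument is correct and is exactly the natural adaptation of the classical deletion--contraction bijection to the restrained setting; the restraint bookkeeping at the contracted vertex is handled carefully and correctly forces $r_{uv}(w)=r(u)\cup r(v)$. Note, however, that the paper does not actually supply a proof of this lemma---it is quoted from \cite{extremalaysel}---so there is no in-paper argument to compare against, but your proof is precisely the standard one that reference would contain.
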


Given a restraint function $r$ on a graph $G$, let $M_{G,r}$ be the maximum value in $\displaystyle \bigcup_{v \in V(G)} r(v)$ if the set is nonempty and $0$ otherwise. By using Lemma~\ref{restcontrac}, it is easy to see that the following holds.

\begin{theorem}\cite{extremalaysel}
Let $G$ be a graph of order $n$ and $r$ be a restraint on $G$.  Then for all $x\geq M_{G,r}$, the function $\pi_{r}(G,x)$ is a monic polynomial of degree $n$ with integer coefficients that  alternate in sign.
\end{theorem}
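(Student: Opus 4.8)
The plan is to induct on the number of edges of $G$, using the deletion–contraction formula of Lemma~\ref{restcontrac}. Throughout, call a degree-$m$ polynomial \emph{good} if it is monic with integer coefficients whose signs alternate, meaning the coefficient of $x^{m-j}$ has the form $(-1)^j b_j$ with each $b_j \ge 0$ and $b_0 = 1$. The claim to be proved is then that, for every integer $x \ge M_{G,r}$, the value $\pi_r(G,x)$ agrees with a fixed good polynomial of degree $n$.

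For the base case take $G = \overline{K_n}$, the graph with no edges. When $x \ge M_{G,r}$, every forbidden set $r(v)$ lies entirely inside $\{1,\dots,x\}$, so vertex $v$ has exactly $x - |r(v)|$ admissible colours; since there are no edges these choices are independent, giving $\pi_r(\overline{K_n},x) = \prod_{v \in V(G)} (x - |r(v)|)$. This is a product of $n$ monic linear factors with non-negative integer roots, and expanding it, the coefficient of $x^{n-j}$ is $(-1)^j e_j$, where $e_j \ge 0$ is the $j$-th elementary symmetric function of the numbers $|r(v)|$; hence it is good of degree $n$.

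For the inductive step, choose an edge $uv$ and apply Lemma~\ref{restcontrac} to write $\pi_r(G,x) = \pi_r(G-uv,x) - \pi_{r_{uv}}(G\cdot uv,x)$. Here $G-uv$ has $n$ vertices and fewer edges with the same restraint $r$, so $M_{G-uv,r} = M_{G,r}$; and $G\cdot uv$ has $n-1$ vertices and fewer edges, while the forbidden colours of $r_{uv}$ form a subset of those of $r$, so $M_{G\cdot uv,\,r_{uv}} \le M_{G,r}$. Consequently, for every integer $x \ge M_{G,r}$ the induction hypothesis applies to both terms: $\pi_r(G-uv,x)$ equals a good polynomial $P$ of degree $n$, and $\pi_{r_{uv}}(G\cdot uv,x)$ equals a good polynomial $Q$ of degree $n-1$, on this whole range.

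It then remains to verify the purely algebraic fact that $P-Q$ is good of degree $n$, which is the crux of the argument. Writing $P(x) = \sum_{j=0}^{n} (-1)^j b_j x^{n-j}$ and $Q(x) = \sum_{j=0}^{n-1} (-1)^j d_j x^{n-1-j}$ with all $b_j, d_j \ge 0$ and $b_0 = d_0 = 1$, the coefficient of $x^n$ in $P-Q$ is $b_0 = 1$, while for $1 \le i \le n$ the coefficient of $x^{n-i}$ equals $(-1)^i b_i - (-1)^{i-1} d_{i-1} = (-1)^i (b_i + d_{i-1})$. Since $b_i + d_{i-1} \ge 0$, this carries the required sign $(-1)^i$, so $P-Q$ is monic of degree $n$ with alternating integer coefficients. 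What makes this work — and the only place the precise shape of the hypothesis is used — is that the two polynomials differ in degree by exactly one, so at each power the sign carried by $Q$ is opposite to that carried by $P$, and the subtraction reinforces rather than cancels the alternating pattern. I expect this sign-bookkeeping, together with checking that the $M$-values behave correctly so that the induction hypothesis is available on the full range $x \ge M_{G,r}$, to be the main point to get right; the remainder is routine.
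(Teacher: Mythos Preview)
Your proof is correct and follows exactly the approach the paper indicates: the paper does not give a detailed argument but simply states that the result ``is easy to see'' by using Lemma~\ref{restcontrac}, i.e., induction on the number of edges via deletion--contraction with the base case of the empty graph (Proposition~\ref{emptygraphrest}). Your careful tracking of the $M$-values and the sign-bookkeeping showing that subtracting a good polynomial of degree $n-1$ from a good polynomial of degree $n$ yields a good polynomial of degree $n$ fills in precisely the details the paper leaves to the reader.
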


Let $A=[x_1,\dots , x_n]$ be a sequence of variables. Then recall that for $i\in \{0,\dots ,n\}$, the $i^{th}$ elementary symmetric function on $A$ is equal to
$$S_i(A)=\sum_{1\leq k_1<\dots <k_i\leq n}x_{k_1}\dots x_{k_i}.$$

\begin{proposition}\label{emptygraphrest}Let $r$ be a restraint function on the empty graph $G=\overbar{K_n}$. Then for all $x\geq M_{G,r}$,
$$ \pi_{r}(G, x)=\prod_{v \in V(G)} {(x - |r(v)|)}= \sum_{i=0}^n (-1)^{n-i}S_i(A)x^{n-i}$$
where $ A=[~|r(v)|~:~v\in V(G)~]$.
\end{proposition}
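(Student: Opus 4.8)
The plan is to prove this by a direct counting argument, since the empty graph has no edges and hence no properness constraints to worry about. For the empty graph $\overbar{K_n}$, a proper $x$-colouring is simply \emph{any} function $f : V(G) \to \{1, 2, \dots, x\}$, because there are no edges $uv \in E(G)$ to violate. Thus the only restriction on $f$ comes from the restraint $r$: at each vertex $v$ we must have $f(v) \notin r(v)$. Since the colours assigned to distinct vertices are completely independent of one another (no edges link them), the number of permitted colourings factorizes over the vertices.

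First I would fix $x \geq M_{G,r}$, which by definition of $M_{G,r}$ guarantees that $r(v) \subseteq \{1, 2, \dots, x\}$ for every vertex $v$; this ensures that all $|r(v)|$ forbidden colours actually lie in the available palette $\{1, \dots, x\}$, so the number of \emph{allowed} colours at $v$ is exactly $x - |r(v)|$ rather than something smaller or capped. Then, counting the permitted colourings vertex by vertex and using the independence across vertices (a product rule for the disjoint choices at each vertex), I obtain
$$\pi_r(G, x) = \prod_{v \in V(G)} (x - |r(v)|).$$
This establishes the first equality. The condition $x \geq M_{G,r}$ is exactly what is needed here: without it, some $r(v)$ might contain colours exceeding $x$, and the count $x - |r(v)|$ could be wrong.

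For the second equality, I would expand the product $\prod_{v \in V(G)} (x - |r(v)|)$ and identify its coefficients with the elementary symmetric functions of the sequence $A = [\,|r(v)| : v \in V(G)\,]$. Writing $a_v = |r(v)|$, the standard identity for expanding a product of linear factors gives
$$\prod_{v} (x - a_v) = \sum_{i=0}^n (-1)^i S_i(A) \, x^{n-i},$$
where $S_i(A)$ is the $i^{th}$ elementary symmetric function and $S_0(A) = 1$. To match the form in the statement, I would reindex the sum by replacing $i$ with $n - i$, so that the exponent $x^{n-i}$ becomes $x^i$ and the sign $(-1)^i$ becomes $(-1)^{n-i}$; this is a purely cosmetic relabelling that recovers the stated expression $\sum_{i=0}^n (-1)^{n-i} S_i(A) x^{n-i}$ (after swapping the summation index back). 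This second step is entirely routine polynomial algebra.

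I do not anticipate any genuine obstacle in this proof. The only point requiring care is the role of the hypothesis $x \geq M_{G,r}$ in justifying the count $x - |r(v)|$ of allowed colours at each vertex, and the verification that the expansion of the product agrees with the sign and indexing conventions used for the elementary symmetric functions. Both are straightforward once the independence of vertex colourings in the edgeless graph is observed.
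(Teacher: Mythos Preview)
The paper does not supply a proof of this proposition; it is stated as self-evident. Your direct counting argument is exactly the natural justification and is correct: with no edges the vertex colours are independent, and the hypothesis $x \ge M_{G,r}$ guarantees each $r(v)\subseteq\{1,\dots,x\}$, so there are precisely $x-|r(v)|$ admissible colours at $v$ and the product formula follows. One small caveat on the second equality: the standard expansion gives $\prod_v(x-|r(v)|)=\sum_{i=0}^n(-1)^{i}S_i(A)x^{n-i}$, and no reindexing turns this into the form $\sum_{i=0}^n(-1)^{n-i}S_i(A)x^{n-i}$ printed in the statement (check the leading coefficient when $n$ is odd); this is a typographical slip in the paper, not a flaw in your argument, so you should simply record the correct sign rather than claim the relabelling matches.
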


We now define when two restraints are essentially the same, with respect to the number of permitted proper colourings.

\begin{definition} Let $r$ and $r'$ be two  restraints on $G$. Set 
$\displaystyle{r(G) = \bigcup_{u\in V(G)} r(u)}$ and $\displaystyle{r'(G) = \bigcup_{u\in V(G)} r'(u)}$.
We say that $r$ and $r'$ are \textit{equivalent} restraints, denoted by $r\simeq r'$, if there exists a graph automorphism $\phi$ of $G$ and a bijective function $f:r(G) \mapsto r'(G)$ such that $$f(r(u))=r'(\phi(u))$$ for every vertex $u$ of $G$. If $r$ and $r'$ are not equivalent, then we call them nonequivalent restraints and write $r\not\simeq r'$.
\end{definition}

\begin{example}
Let $G=P_3$ and $v_1,v_2,v_3\in V(G)$ such that $v_iv_{i+1}\in E(G)$. Consider the restraints $r_1=[\{1\},\{2\},\{3\}]$, $r_2=[\{2\},\{1\},\{4\}]$, $r_3=[\{1\},\{1\},\{2\}]$ and $r_4=[\{3\},\{2\},\{2\}]$ (see Figure~\ref{equivrest}). Then $r_1\simeq r_2$, $r_3\simeq r_4$ and $r_1\not\simeq r_3$.
\end{example}

\begin{figure}[htbp]
\begin{center}
\includegraphics[scale=0.3]{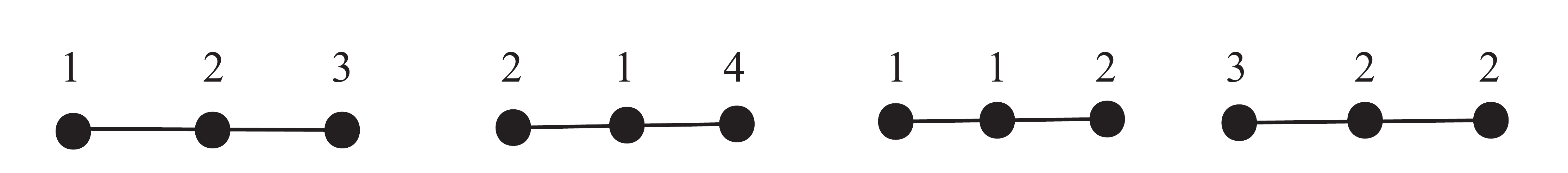}
\end{center}
\caption{Some restraints on $P_3$.}
\label{equivrest}
\end{figure}

It is clear that if $r$ and $r'$ are two equivalent restraints, then $\pi_r(G,x)=\pi_{r'}(G,x)$ for all $x$ sufficiently large. 


\begin{example}
Consider the cycle $C_{3}$. There are essentially three nonequivalent  simple restraints on $C_{3}$, namely, 
\begin{center}
$r_{1}= [\{1\}, \{1\}, \{1\}]$,\\
$r_{2} = [\{1\}, \{2\}, \{1\}]$,\\
$r_{3}= [\{1\}, \{2\}, \{3\}]$.
\end{center}
For $x\geq 3$, the restrained chromatic polynomials with respect to these restraints can be calculated via the Edge Deletion-Contraction Formula as
\begin{eqnarray*}
\pi_{r_1}(C_3,x) & = & (x-1)(x-2)(x-3),\\
\pi_{r_2}(C_3,x) & = & (x-2)(x^2-4x+5), \mbox{ and} \\
\pi_{r_3}(C_3,x) & = & 2(x-2)^2+(x-2)(x-3)+(x-3)^3.
\end{eqnarray*}
where $\pi_{r_1}(C_3,x)<\pi_{r_2}(C_3,x)<\pi_{r_3}(C_3,x)$ holds  for $x>3.$ Hence, $r_3$ permits the largest number of $x$-colourings whereas $r_1$ permits the smallest number of $x$-colourings for large enough $x$.
\end{example}

Given two graphs $G$ and $H$, let $\eta_G(H)$ be the number of subgraphs of $G$ which are isomorphic to $H$, and $i_G(H)$ be the number of induced subgraphs of $G$ which are isomorphic to $H$. In the sequel, we will also need the following result regarding some coefficients of the chromatic polynomial.

\begin{theorem}\label{chromaticcoeff}\cite[pg. 31]{dongbook} If $G$ is a graph of order $n$ and size $m$, then 
$$\pi(G,x)=\sum_{i=1}^{n}(-1)^{n-i}h_i(G)x^i $$
is a polynomial in $x$ such that 
\begin{eqnarray*} 
h_{n-2} & = & {m\choose 2}-\eta_G(C_3) \quad \text{and} \\
h_{n-3} & = & {m \choose 3}-(m-2)\eta_G(K_3)-i_G(C_4)+2\eta_G(K_4).
\end{eqnarray*}
\end{theorem}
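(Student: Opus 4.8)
The plan is to derive both coefficient formulas from Whitney's subgraph (state-sum) expansion of the chromatic polynomial,
$$\pi(G,x) = \sum_{S \subseteq E(G)} (-1)^{|S|}\, x^{c(S)},$$
where $c(S)$ denotes the number of connected components of the spanning subgraph $(V(G),S)$; this identity follows, for instance, by repeatedly applying deletion--contraction, or by inclusion--exclusion over the edges forced to be monochromatic. Writing $\operatorname{rank}(S)=n-c(S)$, the coefficient of $x^{n-j}$ is $\sum_{\operatorname{rank}(S)=j}(-1)^{|S|}$, and since $\pi(G,x)=\sum_{i}(-1)^{n-i}h_i(G)x^i$ gives this coefficient as $(-1)^{j}h_{n-j}(G)$, the whole task reduces to classifying, for $j=2$ and $j=3$, the edge sets of rank $j$ in a simple graph by their number of edges and isomorphism type.

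For $j=2$ the rank-$2$ edge sets are exactly the $2$-element edge sets (any two distinct edges of a simple graph have rank $2$, whether or not they meet) together with the triangles (three edges spanning a single rank-$2$ component); one checks that these are the only possibilities since a rank-$2$ set has either two or three edges. Hence the coefficient of $x^{n-2}$ equals $\binom{m}{2}-\eta_G(C_3)$, and as this coefficient is $(-1)^2h_{n-2}=h_{n-2}$, the first formula follows at once.

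For $j=3$ I would enumerate the rank-$3$ edge sets by size: the acyclic $3$-edge sets, of which there are $\binom{m}{3}-\eta_G(C_3)$; the $4$-edge sets containing a single cycle, which are either a triangle plus one of the remaining $m-3$ edges or a $4$-cycle, contributing $(m-3)\eta_G(C_3)+\eta_G(C_4)$; the $5$-edge rank-$3$ sets, which (in a simple graph) are forced to be the diamonds $K_4-e$; and the $6$-edge sets, which are exactly the copies of $K_4$. Collecting the signs $(-1)^{|S|}$ and using that the coefficient of $x^{n-3}$ equals $(-1)^3h_{n-3}=-h_{n-3}$ yields
$$h_{n-3} = \binom{m}{3}-(m-2)\eta_G(C_3)-\eta_G(C_4)+\eta_G(K_4-e)-\eta_G(K_4),$$
where $\eta_G(K_4-e)$ counts subgraphs isomorphic to the diamond.

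The last and most delicate step is to rewrite this in terms of the induced count $i_G(C_4)$. The key is the identity $\eta_G(K_4-e)=\eta_G(C_4)-i_G(C_4)+3\eta_G(K_4)$, which I would prove locally by fixing a $4$-vertex set $Q$ and checking, case by case on the isomorphism type of $G[Q]$, that the number of diamond subgraphs on $Q$ equals the number of $4$-cycle subgraphs on $Q$, minus $1$ when $G[Q]\cong C_4$, plus $3$ when $G[Q]\cong K_4$ (and $0$ in all remaining cases, since a diamond already contains a $4$-cycle); summing over all $Q$ gives the global identity. Substituting it collapses $-\eta_G(C_4)+\eta_G(K_4-e)-\eta_G(K_4)$ to $-i_G(C_4)+2\eta_G(K_4)$ and, together with $\eta_G(C_3)=\eta_G(K_3)$, produces the stated formula. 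I expect the main points requiring care to be the exhaustiveness of the rank-$3$ enumeration in a \emph{simple} graph (in particular ruling out disconnected configurations for the $5$- and $6$-edge cases) and this final subgraph-to-induced conversion.
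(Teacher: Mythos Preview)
The paper does not supply a proof of this statement; it is quoted from the textbook \cite[p.~31]{dongbook} and used as a black box in the later arguments. So there is no in-paper proof to compare your attempt against.

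That said, your argument is correct and is in fact the standard derivation via Whitney's subgraph expansion, which is essentially how the cited textbook obtains these formulas as well. Your rank-$2$ and rank-$3$ enumerations are complete: in a simple graph a rank-$3$ edge set with five edges must lie on four vertices in a single component and hence be $K_4-e$, and six edges likewise force $K_4$; the four-edge rank-$3$ sets are exactly the unicyclic sets with cycle length $3$ or $4$, and there is no double counting in $(m-3)\eta_G(C_3)+\eta_G(C_4)$ since such a set contains a unique cycle. Your conversion identity
\[
\eta_G(K_4-e)=\eta_G(C_4)-i_G(C_4)+3\,\eta_G(K_4)
\]
also checks out on each $4$-vertex set $Q$: if $G[Q]\cong K_4$ it reads $6=3-0+3$; if $G[Q]\cong K_4-e$ it reads $1=1-0+0$; if $G[Q]\cong C_4$ it reads $0=1-1+0$; and in every other case all four terms vanish because a diamond subgraph already forces a $C_4$ subgraph on the same four vertices. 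Substituting this and using $\eta_G(C_3)=\eta_G(K_3)$ gives exactly the stated $h_{n-3}$.
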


Lastly, it is easy to see that if $G_1,G_2,\dots ,G_t$ are connected components of $G$ then
$$\pi_r(G,x)=\prod_{i=1}^t \pi_r(G_i,x).$$
Therefore, $r$ is an extremal restraint for $G$ if and only if $r$ induces an extremal restraint on each connected component. So, we may restrict our attention  only to connected graphs.

\section{Determining $R_{\operatorname{min}}(G,k)$}

A restraint function on a graph $G$ is called \textit{constant k-restraint}, denoted by $r_{c}^k$, if  $r_c^k(u)=\{1,\dots ,k\}$ for every vertex $u$ of $G$. We will show that $r_{c}^k$ permits the smallest number of $x$-colourings for every graph $G$ provided that $x$ is large enough. Observe that  
$$\pi_{r_{c}^k}(G,x) = \pi(G,x-k)$$
 for all $x\geq k$. 

To prove the main results of this section, we will make use of the information about the second and third coefficients of the restrained chromatic polynomial. Hence, first we find combinatorial interpretations for these coefficients. Let $m_G$ denote the number of edges of a graph $G$.

\begin{theorem}\label{restcoefn-1}
Let $r$ be a restraint on a graph $G$, $x\geq M_{G,r}$ and $\displaystyle \pi_r(G,x)=\sum_{i=0}^n (-1)^{n-i}a_i(G,r)x^i$. Then,
$$a_{n-1}(G,r)=m_G+\sum_{u\in V(G)}|r(u)|.$$ 
\end{theorem}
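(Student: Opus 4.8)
The plan is to proceed by induction on the number of edges $m_G$, using the Edge Deletion-Contraction Formula (Lemma~\ref{restcontrac}) for the inductive step and Proposition~\ref{emptygraphrest} for the base case. Throughout I work with $x \geq M_{G,r}$, which guarantees that all three polynomials appearing below are genuinely polynomial; note that deletion does not change the restraint, so $M_{G-uv,r}=M_{G,r}$, and contraction only forms unions of existing restraint sets, so $M_{G\cdot uv,\, r_{uv}} \leq M_{G,r}$. Hence every quantity I manipulate is valid on the common range $x \geq M_{G,r}$.

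For the base case, suppose $m_G = 0$, so $G = \overbar{K_n}$. By Proposition~\ref{emptygraphrest}, $\pi_r(G,x)=\prod_{v}(x-|r(v)|)$, whose expansion has $x^n$-coefficient $1$ and $x^{n-1}$-coefficient $-\sum_{v}|r(v)|$. Matching against the convention $\pi_r(G,x)=\sum_{i=0}^n(-1)^{n-i}a_i(G,r)x^i$, in which the coefficient of $x^{n-1}$ equals $-a_{n-1}(G,r)$, yields $a_{n-1}(G,r)=\sum_{v}|r(v)| = m_G + \sum_{v}|r(v)|$, as required.

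For the inductive step, fix an edge $uv\in E(G)$ and apply Lemma~\ref{restcontrac} to write
$$\pi_r(G,x)=\pi_r(G-uv,x)-\pi_{r_{uv}}(G\cdot uv,x).$$
The graph $G-uv$ has $n$ vertices, $m_G-1$ edges, and restraint $r$, so by the inductive hypothesis its $x^{n-1}$-coefficient is $-a_{n-1}(G-uv,r)=-\bigl((m_G-1)+\sum_{v}|r(v)|\bigr)$. The crucial observation is that $G\cdot uv$ has only $n-1$ vertices, so $\pi_{r_{uv}}(G\cdot uv,x)$ is a monic polynomial of degree $n-1$; consequently its contribution to the $x^{n-1}$-coefficient of $\pi_r(G,x)$ is simply its leading coefficient, namely $1$, and the precise form of the merged restraint $r_{uv}$ is irrelevant at this order. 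Comparing $x^{n-1}$-coefficients on both sides gives $-a_{n-1}(G,r)=-\bigl((m_G-1)+\sum_{v}|r(v)|\bigr)-1$, hence $a_{n-1}(G,r)=m_G+\sum_{v}|r(v)|$, completing the induction.

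The only delicate point is the sign and degree bookkeeping: one must track that the deletion term lives in degree $n$ while the contraction term drops to degree $n-1$, and that under the alternating-sign convention the coefficient of $x^{n-1}$ carries a factor $-1$. Once the degree drop of the contraction is recognized, the restraint data $r(u)\cup r(v)$ at the identified vertex never enters the computation of this coefficient, which is what makes the recursion collapse cleanly to the claimed additive formula.
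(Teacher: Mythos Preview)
Your proof is correct and follows essentially the same approach as the paper: induction on the number of edges, with Proposition~\ref{emptygraphrest} handling the base case and the Edge Deletion-Contraction Formula handling the step by exploiting that $\pi_{r_{uv}}(G\cdot uv,x)$ is monic of degree $n-1$. Your write-up is in fact more detailed than the paper's (you explicitly justify why $x\geq M_{G,r}$ suffices for the deleted and contracted graphs and carefully track the sign convention), but the underlying argument is identical.
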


\begin{proof}
We proceed by induction on the number of edges. If $G$ has no edges, then $\displaystyle a_{n-1}(G,r)=\sum_{u\in V(G)}|r(u)|$ by Proposition~\ref{emptygraphrest} and the result clearly holds. Suppose that $G$ has at least one edge, say $e$. Since $G-e$ satisfies the induction hypothesis,
$$\pi_r(G-e,x)=x^n-\left(m_G-1+\sum_{u\in V(G)}|r(u)|\right)~x^{n-1}+\dots$$
holds. Now since $\pi_{r_e}(G\cdot e,x)$ is a monic polynomial of degree $n-1$, the result follows from Lemma~\ref{restcontrac}.
\end{proof}

\begin{theorem}\label{restcoefn-2}
Let $r$ be a restraint on $G$, $x\geq M_{G,r}$ and $\displaystyle \pi_r(G,x)=\sum_{i=0}^n (-1)^{n-i}a_i(G,r)x^i$. Also, let $V(G)=\{u_1,\dots u_n\}$. Then, $a_{n-2}(G,r)$ is equal to
$${m_G \choose{2}}-\eta_G(C_3)+\sum_{i < j}|r(u_i)| ~ |r(u_j)| ~ +m_G \sum_{u_i\in V(G)}|r(u_i)| ~ -\sum_{u_iu_j\in E(G)} |r(u_i)\cap r(u_j)| . $$
\end{theorem}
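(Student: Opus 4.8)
The plan is to imitate the inductive strategy used for $a_{n-1}$ in Theorem~\ref{restcoefn-1}: I would induct on the number of edges of $G$ and, at each step, read off the coefficient of $x^{n-2}$ from the Edge Deletion-Contraction Formula (Lemma~\ref{restcontrac}). For the base case $m_G=0$ every summand of the claimed expression except $\sum_{i<j}|r(u_i)||r(u_j)|$ vanishes, and expanding $\pi_r(\overbar{K_n},x)=\prod_v(x-|r(v)|)$ via Proposition~\ref{emptygraphrest} shows that the coefficient of $x^{n-2}$ is exactly $S_2(A)=\sum_{i<j}|r(u_i)||r(u_j)|$, which is $a_{n-2}(G,r)$ in the stated convention. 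So the base case is immediate.

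For the inductive step I would fix an edge $e=u_pu_q$ and use $\pi_r(G,x)=\pi_r(G-e,x)-\pi_{r_e}(G\cdot e,x)$. The graph $G-e$ has $n$ vertices and fewer edges, so by induction its coefficient of $x^{n-2}$ is the target formula with $m_G$ replaced by $m_G-1$ and $E(G)$ replaced by $E(G-e)$. The crucial point is that $G\cdot e$ has only $n-1$ vertices, so the coefficient of $x^{n-2}$ there is its \emph{second-leading} coefficient $a_{(n-1)-1}(G\cdot e,r_e)$, which is governed by Theorem~\ref{restcoefn-1} (not by the unknown $a_{n-2}$) and equals $m_{G\cdot e}+\sum_{w}|r_e(w)|$. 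Accounting for the sign $(-1)^{(n-1)-(n-2)}=-1$ attached to that coefficient inside $\pi_{r_e}(G\cdot e,x)$, the deletion-contraction identity yields $a_{n-2}(G,r)=a_{n-2}(G-e,r)+\bigl(m_{G\cdot e}+\sum_w|r_e(w)|\bigr)$.

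It then remains to compare the ingredients for $G$, $G-e$ and $G\cdot e$. The required identities are elementary: $\binom{m_G}{2}-\binom{m_G-1}{2}=m_G-1$; deleting $e$ destroys precisely the triangles through $e$, so $\eta_G(C_3)-\eta_{G-e}(C_3)$ equals the number $c$ of common neighbours of $u_p$ and $u_q$; contracting $e$ removes the edge $e$ itself and one parallel edge per common neighbour, giving $m_{G\cdot e}=m_G-1-c$; and since $r_e(w)=r(u_p)\cup r(u_q)$, inclusion-exclusion gives $\sum_w|r_e(w)|=\sum_{v\in V(G)}|r(v)|-|r(u_p)\cap r(u_q)|$. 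Substituting the inductive formula for $a_{n-2}(G-e,r)$ together with these identities, I expect two cancellations to finish the computation: the common-neighbour count $c$ cancels between the triangle term (contributing $+c$) and the contracted edge count (contributing $-c$), and the intersection $|r(u_p)\cap r(u_q)|$ cancels between the edge term restored by passing to $G-e$ and the union at $w$. The surviving terms then recombine, via $\binom{m_G-1}{2}+(m_G-1)=\binom{m_G}{2}$ and $(m_G-1)\sum|r(u_i)|+\sum|r(u_i)|=m_G\sum|r(u_i)|$, into exactly the asserted expression for $a_{n-2}(G,r)$.

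I expect the main obstacle to be entirely in the careful sign- and multiplicity-tracking rather than in any conceptual difficulty. One must correctly identify that the relevant coefficient of $\pi_{r_e}(G\cdot e,x)$ is the $a_{(n-1)-1}$ coefficient of an $(n-1)$-vertex polynomial, apply the parity sign flip from the exponent, and be certain that the geometric cancellation (through common neighbours) and the set-theoretic cancellation (through $|r(u_p)\cap r(u_q)|$) both emerge cleanly. Once the correspondence $m_{G\cdot e}=m_G-1-c$ linking contraction, triangles through $e$, and common neighbours is pinned down, what remains is routine algebra.
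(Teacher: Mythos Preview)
Your proposal is correct and follows essentially the same route as the paper's own proof: induction on the number of edges, the base case handled by Proposition~\ref{emptygraphrest}, and the inductive step carried out by applying the induction hypothesis to $G-e$ and Theorem~\ref{restcoefn-1} to the $(n-1)$-vertex graph $G\cdot e$, then assembling the pieces via the identities $m_{G\cdot e}=m_G-1-|N_G(u_p)\cap N_G(u_q)|$, $\eta_G(C_3)=\eta_{G-e}(C_3)+|N_G(u_p)\cap N_G(u_q)|$, and $\sum_w|r_e(w)|=\sum_i|r(u_i)|-|r(u_p)\cap r(u_q)|$. The only cosmetic remark is that the term $|r(u_p)\cap r(u_q)|$ does not so much ``cancel'' as combine with the edge-sum over $E(G-e)$ to reconstitute the full sum over $E(G)$, exactly as in the paper.
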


\begin{proof}
Again we proceed by induction on the number of edges. If $G$ has no edges then $\displaystyle a_{n-2}(G,r)$ is equal to $\sum_{i < j}|r(u_i)| ~ |r(u_j)|$ by Proposition~\ref{emptygraphrest} and the result is clear. Suppose that $G$ has at least one edge, say $e=uv$. Since $G-e$ satisfies the induction hypothesis, the coefficient of $x^{n-2}$ in $\pi_r(G-e,x)$ is equal to
\begin{eqnarray*}
&& {{m_G -1} \choose{2}}~ - ~ \eta_{G-e}(C_3)~ + ~ \sum_{i < j}|r(u_i)| ~ |r(u_j)| ~ + ~(m_G-1)\sum_{u_i\in V(G)}|r(u_i)| \\
&& -\sum_{u_iu_j\in E(G)\setminus \{e\}} |r(u_i)\cap r(u_j)| . 
\end{eqnarray*}
Also, by Theorem~\ref{restcoefn-1}, the coefficient of $x^{n-2}$ in $\pi_r(G\cdot e , x)$ is equal to
$$-m_{G\cdot e}-\sum_{w\in V(G\cdot e)}|r_e(w)|$$
as $G\cdot e$ has $n-1$ vertices.
Observe that $m_{G\cdot e}=m_G-1-|N_G(u)\cap N_G(v)|$ and $|N_G(u)\cap N_G(v)|$ is equal to the number of triangles which contain the edge $uv$. Also, $\eta_{G-e}(C_3)$ is the number of triangles of $G$ which does not contain the edge $uv$. Therefore,
$${{m_G-1}\choose {2}}-\eta_{G-e}(C_3)+m_{G\cdot e} = {{m_G}\choose {2}}- \eta_G(C_3).$$
For a vertex $w$ in $V(G\cdot e)$, by the definition of $r_e$ given in the Edge Deletion-Contraction Formula, $r_e(w)=r(u)\cup r(v)$ if $w$ is obtained by contracting $u$ and $v$, and $r_e(w)=r(w)$ otherwise. Therefore,
\begin{eqnarray*}
\sum_{w\in V(G\cdot e)} |r_e(w)| &=& \sum_{u_i\in V(G)\setminus \{u,v\}}|r(u_i)|~+~|r(u)\cup r(v)|\\
&=& \sum_{u_i\in V(G)\setminus \{u,v\}}|r(u_i)|~+~|r(u)|+|r(v)|-|r(u)\cap r(v)|\\
&=&  \sum_{u_i\in V(G)}|r(u_i)|-|r(u)\cap r(v)|\\
\end{eqnarray*}
Thus, $$(m_G-1)\sum_{u_i\in V(G)}|r(u_i)|~ -\sum_{u_iu_j\in E(G)\setminus \{e\}} |r(u_i)\cap r(u_j)| ~+~ \sum_{w\in V(G\cdot e)}|r_e(w)|$$ is equal to 
$$m_G \sum_{u_i\in V(G)}|r(u_i)| - \sum_{u_iu_j\in E(G)} |r(u_i)\cap r(u_j)|. $$
Hence, the result follows from Lemma~\ref{restcontrac}.
\end{proof}

Now we are ready to answer the question of which $k$-restraint permits the smallest number of  colourings, for a large enough number of colours. 

\begin{theorem}\label{constantrestraint}
 Let $G$ be any connected graph. Then, $r\in R_{\operatorname{min}}(G,k)$ if and only if $r\simeq r_c^k$.
\end{theorem}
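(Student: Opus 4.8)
The plan is to compare restrained chromatic polynomials of $k$-restraints coefficient by coefficient, using that for large $x$ the polynomial with smaller leading coefficients (after the common monic term) takes smaller values. Writing $\pi_r(G,x)=\sum_{i=0}^n(-1)^{n-i}a_i(G,r)x^i$, the coefficient $a_n=1$ is common to all restraints, so the first point of comparison is $a_{n-1}$. By Theorem~\ref{restcoefn-1}, $a_{n-1}(G,r)=m_G+\sum_{u}|r(u)|=m_G+kn$ for every $k$-restraint $r$, so this coefficient too is common to all $k$-restraints. Hence the comparison descends to $a_{n-2}$: if $a_{n-2}(G,r)<a_{n-2}(G,r')$, then $\pi_r(G,x)<\pi_{r'}(G,x)$ for all sufficiently large $x$, since $\pi_{r'}-\pi_r$ is a polynomial whose leading term is $(a_{n-2}(G,r')-a_{n-2}(G,r))x^{n-2}$.

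Next I would isolate the variable part of $a_{n-2}$. By Theorem~\ref{restcoefn-2}, for a $k$-restraint every summand except $-\sum_{u_iu_j\in E(G)}|r(u_i)\cap r(u_j)|$ depends only on $G$ and $k$; indeed $\sum_{i<j}|r(u_i)||r(u_j)|=\binom{n}{2}k^2$ and $m_G\sum_i|r(u_i)|=m_G\,kn$. Therefore minimizing $a_{n-2}(G,r)$ over $k$-restraints is equivalent to maximizing $\sum_{u_iu_j\in E(G)}|r(u_i)\cap r(u_j)|$. Since $|r(u_i)|=|r(u_j)|=k$, each term satisfies $|r(u_i)\cap r(u_j)|\le k$, with equality if and only if $r(u_i)=r(u_j)$. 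Thus the sum is at most $k\,m_G$, and this maximum is attained exactly when $r(u_i)=r(u_j)$ for every edge $u_iu_j$. Because $G$ is connected, this forces $r$ to be constant, i.e.\ there is a single $k$-set $S\subseteq\{1,\dots,kn\}$ with $r(u)=S$ for every vertex $u$.

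It then remains to identify the constant $k$-restraints with the equivalence class of $r_c^k$. If $r(u)=S$ for all $u$, then the identity automorphism of $G$ together with any bijection $f\colon S\to\{1,\dots,k\}$ shows $r\simeq r_c^k$; conversely, if $r\simeq r_c^k$ via some $\phi$ and $f$, then $f(r(u))=\{1,\dots,k\}$ for all $u$ forces $r(u)=f^{-1}(\{1,\dots,k\})$ to be the same $k$-set for every $u$, so $r$ is constant. All such restraints yield the same polynomial $\pi_{r_c^k}(G,x)$ for large $x$. Assembling the pieces: the constant restraints achieve the minimum value of $a_{n-2}$ and hence the pointwise minimum of $\pi_r(G,x)$ for large $x$, while every non-constant $k$-restraint has strictly larger $a_{n-2}$ and therefore strictly larger $\pi_r(G,x)$. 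This yields both directions of $r\in R_{\operatorname{min}}(G,k)\iff r\simeq r_c^k$.

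I expect the only delicate point to be the equality analysis turning ``minimal $a_{n-2}$'' into ``$r$ constant'': specifically, justifying via connectedness that equality on every edge propagates to all of $V(G)$, and then checking that the resulting constant restraints form precisely the equivalence class of $r_c^k$ rather than a larger or smaller family. The coefficient computations themselves are already supplied by Theorems~\ref{restcoefn-1} and~\ref{restcoefn-2}, so no further symmetric-function bookkeeping is needed.
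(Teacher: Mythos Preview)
Your proposal is correct and follows essentially the same approach as the paper: both arguments use Theorems~\ref{restcoefn-1} and~\ref{restcoefn-2} to see that the first coefficient distinguishing $k$-restraints is $a_{n-2}$, and both reduce the problem to the observation that $k\,m_G-\sum_{uv\in E(G)}|r(u)\cap r(v)|$ is nonnegative, vanishing precisely when $r$ is constant. Your write-up is in fact more explicit than the paper's about the role of connectedness in propagating $r(u)=r(v)$ across all vertices and about why the constant $k$-restraints form exactly the equivalence class of $r_c^k$.
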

\begin{proof}
We shall show that for every $k$-restraint $r$ such that $r\not\simeq r_c^k$, $\pi_{r_c^k}(G,x)<\pi_r(G,x)$ for all large enough $x$.
Both $\pi_{r_{c}^k}(G,x)$ and $\pi_r(G,x)$ are monic polynomials. Also, the coefficient of the term $x^{n-1}$ is the same for these polynomials by Theorem~\ref{restcoefn-1}. Therefore, $ \pi_r(G,x) - \pi_{r_{c}^k}(G,x)$ is a polynomial of degree $n-2$. Now, by Theorem~\ref{restcoefn-2}, the  leading coefficient of $ \pi_r(G,x) - \pi_{r_{c}^k}(G,x)$  is equal to
$$k\, m_G -\sum_{uv\in E(G)}|r(u)\cap r(v)|$$
which is clearly strictly positive as $r$ is a $k$-restraint. Thus, the desired inequality is obtained.
 \end{proof}

\begin{corollary}\label{constantrestraintcorollary}
 Let $G$ be any graph. Then, $r\in R_{\operatorname{min}}(G,k)$ if and only if $r$ induces a constant restraint on each connected component of $G$.
\end{corollary}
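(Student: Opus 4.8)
The plan is to reduce the statement for an arbitrary graph $G$ to the connected case already settled in Theorem~\ref{constantrestraint}, exploiting the multiplicativity of the restrained chromatic polynomial over connected components. Let $G_1,\dots,G_t$ be the components of $G$, with $|V(G_i)|=n_i$, so that $\pi_r(G,x)=\prod_{i=1}^t \pi_r(G_i,x)$ for all large $x$. Each factor $\pi_r(G_i,x)$ is a monic polynomial of degree $n_i\ge 1$, hence strictly positive once $x$ is large enough; this positivity is the device that will let me pass between factorwise comparisons and comparisons of the full products.

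For the ``if'' direction, suppose $r$ induces a constant restraint on each $G_i$, that is $r|_{G_i}\simeq r_c^k$ on $G_i$. Given any competing $k$-restraint $r'$ on $G$, Theorem~\ref{constantrestraint} applied componentwise gives $\pi_r(G_i,x)=\pi_{r_c^k}(G_i,x)\le \pi_{r'}(G_i,x)$ for all large $x$. Multiplying these inequalities, and using that every factor is positive for large $x$, yields $\pi_r(G,x)\le\pi_{r'}(G,x)$ for all large $x$, so $r\in R_{\operatorname{min}}(G,k)$.

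For the ``only if'' direction I would argue by contraposition. Suppose $r$ fails to induce a constant restraint on some component, say $r|_{G_j}\not\simeq r_c^k$. By Theorem~\ref{constantrestraint} we then have $\pi_{r_c^k}(G_j,x)<\pi_r(G_j,x)$ for all large $x$. Form the restraint $r'$ on $G$ that equals $r_c^k$ on $G_j$ and agrees with $r$ on the remaining components; since all colours used lie in $\{1,\dots,kn\}$ and $|r'(u)|=k$ throughout, $r'$ is a genuine $k$-restraint on $G$. Then $\pi_{r'}(G,x)=\pi_{r_c^k}(G_j,x)\prod_{i\ne j}\pi_r(G_i,x)<\pi_r(G,x)$ for all large $x$, the strict inequality surviving precisely because the untouched factors are positive for large $x$. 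Hence $r\notin R_{\operatorname{min}}(G,k)$, completing the contrapositive.

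The logical content here is light, so the work is bookkeeping rather than a genuine obstacle; the one point needing care is treating the induced restraint on a component as a bona fide $k$-restraint. Restricting $r$ to $G_i$ preserves $|r(u)|=k$, and since at most $kn_i$ distinct colours appear on $G_i$, this restriction is equivalent to a $k$-restraint on $G_i$ in the exact sense required by Theorem~\ref{constantrestraint}; equivalent restraints share the same polynomial for large $x$ and the same relationship to $r_c^k$, so the theorem transfers. Conversely, reassembling modified component restraints into a global $k$-restraint only needs the colours to fit in $\{1,\dots,kn\}$, which is automatic as the components are vertex-disjoint and $\sum_i kn_i = kn$. Finally, because all colours lie in the finite set $\{1,\dots,kn\}$ there are only finitely many $k$-restraints on $G$, so the various thresholds ``for all large $x$'' may be taken uniform and the eventual-dominance comparisons are unambiguous.
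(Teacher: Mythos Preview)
Your proposal is correct and follows essentially the same route as the paper: the paper derives the corollary directly from Theorem~\ref{constantrestraint} together with the multiplicativity $\pi_r(G,x)=\prod_i \pi_r(G_i,x)$ over connected components (stated at the end of Section~2), and you simply make this reduction explicit. Your extra care about the restriction $r|_{G_i}$ only being equivalent to a genuine $k$-restraint on $G_i$ (rather than literally one) is a detail the paper leaves implicit.
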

 
 \begin{remark}
One can give an alternative proof for the fact that constant $k$-restraint is in $R_{\operatorname{min}}(G,k)$ by using some earlier results regarding  list colourings. But first let us summarize some related work. Kostochka and Sidorenko \cite{kost} showed that if a chordal graph $G$ has a list of $l$ available colours at each vertex, then the number of list colourings is at least $\pi(G,l)$ for every natural number $l$. It is known that there exist graphs $G$ (see, for example, Example~$1$ in \cite{donner}) for which the number of list colourings is strictly less than $\pi(G,l)$ for some natural number $l$. On the other hand, in $1992$, Donner \cite{donner} proved in that for any graph $G$, the number of list colourings is at least $\pi(G,l)$ when $l$ is sufficiently large compared to the number of vertices of the graph. Later, in $2009$, Thomassen proved the same result for $l\geq n^{10}$ where $n$ is the order of the graph. Recently, in \cite{wang} the latter result is improved to $l> \frac{m-1}{\ln (1+ \sqrt{2})}$ by Wang et el.

 As we already pointed out, given a  $k$-restraint $r$ on a graph $G$ and a natural number $x\geq kn$, we can consider an $x$-colouring permitted by $r$ as a list colouring $L$ where each vertex $v$ has a list $L(v)=\{1,\dots ,x\}\setminus r(v)$ of $x-k$ available colours. Therefore, we derive that for a  $k$-restraint $r$ on graph $G$, $\pi_r(G,x) \geq \pi(G,x-k)$ for any natural number $x> \frac{m-1}{\ln (1+ \sqrt{2})}+kn$. But since $\pi_{r_{c}^k}(G,x)$ is equal to $\pi(G,x-k)$, it follows that  $\pi_r(G,x)\geq \pi_{r_{c}^k}(G,x)$ for  $x> \frac{m-1}{\ln (1+ \sqrt{2})}+kn$. Thus, this shows that  $r_c^k\in R_{\operatorname{min}}(G,k)$.

\end{remark}

\section{Two necessary conditions for a restraint to be in $R_{\operatorname{max}}(G,k)$}

The $k$-restraints that permit the smallest number of colourings are easy to describe, and  are, in fact, the same for all graphs.
The more difficult question is which $k$-restraints permit the largest number of colourings; even for special families of graphs, it appears difficult, so we will focus on this question. As we shall see, the extremal $k$-restraints differ from graph to graph.

In this section, we are going to present two results (Theorems~\ref{neccndtn1}, \ref{neccndtn2}) which give necessary conditions for a restraint to be in $R_{\operatorname{max}}(G,k)$ for all graphs $G$. The necessary conditions  given in Theorem~\ref{neccndtn1} and Theorem~\ref{neccndtn2} become sufficient to determine $R_{\operatorname{max}}(G,k)$ when $G$ is a complete graph and bipartite graph respectively. \bigskip

A restraint $r$ on a graph $G$ is called a \textit{proper restraint} if $r(u)\cap r(v)=\emptyset$ for every $uv\in E(G)$. We begin with showing that restraints in $ R_{\operatorname{max}}(G,k)$  must be proper restraints.

\begin{theorem}\label{neccndtn1}
 If $r\in R_{\operatorname{max}}(G,k)$ then $r$ is a proper restraint.
\end{theorem}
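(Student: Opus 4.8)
The plan is to benchmark the given restraint against an explicitly constructed proper $k$-restraint, and to show that any overlap across an edge makes the proper restraint strictly better for large $x$, contradicting membership in $R_{\operatorname{max}}(G,k)$. The engine of the argument is the observation that, among $k$-restraints, the two coefficient formulas already proved isolate the edge-overlap as the only quantity that can vary.

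First I would specialize Theorems~\ref{restcoefn-1} and \ref{restcoefn-2} to $k$-restraints. Since $|r(u)|=k$ for every vertex, Theorem~\ref{restcoefn-1} gives $a_{n-1}(G,r)=m_G+kn$, which is the same for every $k$-restraint. In Theorem~\ref{restcoefn-2}, the terms $\sum_{i<j}|r(u_i)|\,|r(u_j)|=\binom{n}{2}k^2$ and $m_G\sum_i|r(u_i)|=knm_G$ are likewise identical for all $k$-restraints, so the only summand depending on the actual choice of $r$ is $-\sum_{uv\in E(G)}|r(u)\cap r(v)|$. Hence, over all $k$-restraints, $a_{n-2}(G,r)$ is largest exactly when $\sum_{uv\in E(G)}|r(u)\cap r(v)|$ is smallest, and this minimum value $0$ is attained precisely by proper restraints.

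Next I would produce a proper $k$-restraint to serve as a comparison. Fix any proper colouring $c:V(G)\to\{0,1,\dots,n-1\}$ and set $r'(v)=\{kc(v)+1,\,kc(v)+2,\,\dots,\,kc(v)+k\}$. Then every colour lies in $\{1,\dots,kn\}$, each $|r'(v)|=k$, and adjacent vertices receive disjoint blocks, so $r'$ is a proper $k$-restraint with $\sum_{uv\in E(G)}|r'(u)\cap r'(v)|=0$.

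Finally I would assemble the contradiction. Assume $r\in R_{\operatorname{max}}(G,k)$ but $r$ is not proper, so $\sum_{uv\in E(G)}|r(u)\cap r(v)|\ge 1$. Both $\pi_r(G,x)$ and $\pi_{r'}(G,x)$ are monic of degree $n$ and share the same $x^{n-1}$ coefficient, so their difference has degree at most $n-2$; its $x^{n-2}$ coefficient equals $a_{n-2}(G,r')-a_{n-2}(G,r)=\sum_{uv\in E(G)}|r(u)\cap r(v)|>0$ (using $(-1)^{n-(n-2)}=1$). Therefore $\pi_{r'}(G,x)>\pi_r(G,x)$ for all large $x$, contradicting $r\in R_{\operatorname{max}}(G,k)$, and the claim follows. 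I expect the only delicate point to be the bookkeeping in the second step that pins down the intersection term as the sole variable part of $a_{n-2}$ among $k$-restraints; once that is in hand, the construction of $r'$ and the degree-drop comparison are routine.
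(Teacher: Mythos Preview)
Your proof is correct and follows essentially the same approach as the paper: both arguments observe that among $k$-restraints the coefficients $a_n$ and $a_{n-1}$ are fixed, and that $a_{n-2}$ varies only through the edge-overlap term $-\sum_{uv\in E(G)}|r(u)\cap r(v)|$, so any non-proper restraint is beaten for large $x$. The one thing you add beyond the paper's terse version is an explicit construction of a proper $k$-restraint $r'$ to compare against (via a proper $n$-colouring and disjoint blocks of size $k$); the paper leaves the existence of such a restraint implicit, so your version is slightly more self-contained but not a different route.
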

\begin{proof}
For  $k$-restraints, from Theorem~\ref{restcoefn-1}, the coefficients of $x^n$ and  $x^{n-1}$  of the restrained chromatic polynomial do not depend on the restraint function. So, in order to maximize the number of $x$-colourings for large enough $x$, one needs to maximize the coefficient of $x^{n-2}$. By Theorem~\ref{restcoefn-2}, it is clear that this coefficient is maximized when $|r(u)\cap r(v)|=0$ for every edge $uv$ of the graph.
\end{proof}

Theorem~\ref{neccndtn1} allows us to determine the extremal restraint for complete graphs, as for such graphs there is a unique (up to equivalence) proper $k$-restraint. We deduce that for complete graphs the extremal restraint is the one where no two vertices have a common restrained colour.

\begin{theorem}\label{completeshortproof}
A restraint $r$ is a proper  $k$-restraint on $K_n$ if and only if $r\in R_{\operatorname{max}}(K_n,k)$.
\end{theorem}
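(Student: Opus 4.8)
The plan is to split the biconditional and lean on Theorem~\ref{neccndtn1} for one direction and on a structural uniqueness observation for the other. The implication $r\in R_{\operatorname{max}}(K_n,k)\Rightarrow r$ proper is immediate: every member of $R_{\operatorname{max}}(K_n,k)$ is a $k$-restraint by definition, and by Theorem~\ref{neccndtn1} it must be a proper restraint. So all the substance lies in the reverse implication, namely that \emph{every} proper $k$-restraint on $K_n$ belongs to $R_{\operatorname{max}}(K_n,k)$.

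First I would pin down the structure of proper $k$-restraints on $K_n$. Since $K_n$ is complete, every pair of vertices is adjacent, so the condition $r(u)\cap r(v)=\emptyset$ on all edges forces the sets $r(v_1),\dots,r(v_n)$ to be pairwise disjoint. Each has size $k$ and is contained in $\{1,\dots,kn\}$, so their disjoint union has size $kn$ and therefore must equal $\{1,\dots,kn\}$ by a counting argument. Hence a proper $k$-restraint on $K_n$ is precisely an ordered partition of $\{1,\dots,kn\}$ into $n$ blocks of size $k$ assigned to the vertices.

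Next I would show that any two proper $k$-restraints $r$ and $r'$ on $K_n$ are equivalent. Taking $\phi=\operatorname{id}$ (any vertex permutation is an automorphism of $K_n$, but the identity already suffices), I would build a bijection $f\colon\{1,\dots,kn\}\to\{1,\dots,kn\}$ block by block: for each $i$, map the $k$ elements of $r(v_i)$ onto the $k$ elements of $r'(v_i)$ bijectively. Concatenating these block bijections yields a global bijection $f$ with $f(r(v_i))=r'(v_i)=r'(\phi(v_i))$ for every $i$, so $r\simeq r'$. As noted earlier in the paper, equivalent restraints satisfy $\pi_r(K_n,x)=\pi_{r'}(K_n,x)$ for all large $x$.

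Finally I would conclude. The set $R_{\operatorname{max}}(K_n,k)$ is nonempty, since extremal restraints always exist, and by the first implication each of its elements is a proper $k$-restraint. Since all proper $k$-restraints are pairwise equivalent and hence share the same restrained chromatic polynomial for large $x$, any one of them being extremal forces all of them to be extremal. Thus every proper $k$-restraint lies in $R_{\operatorname{max}}(K_n,k)$, completing the biconditional. I do not anticipate a genuine obstacle here; the only point requiring care is the counting step that forces the pairwise-disjoint blocks to exhaust $\{1,\dots,kn\}$, which is exactly what makes the uniqueness-up-to-equivalence clean.
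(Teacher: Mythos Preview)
Your proposal is correct and follows essentially the same route as the paper. The paper's own proof is a terse two-liner that relies on the sentence immediately preceding the theorem (``for such graphs there is a unique (up to equivalence) proper $k$-restraint'') together with Theorem~\ref{neccndtn1}; you have simply unpacked that uniqueness claim explicitly via the counting argument and the block-by-block bijection, which is exactly the content the paper leaves implicit.
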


\begin{proof}
 If $r^*$ is a proper $k$ restraint on $K_n$ then $r^*(u)\cap r^*(v)=\emptyset$ for every $u,v\in V(K_n)$. Thus, the result follows from Theorem~\ref{neccndtn1}.
\end{proof}

In general Theorem~\ref{neccndtn1} is not sufficient to determine the extremal restraint. However it is very useful to narrow the possibilities for extremal restraints down to a smaller number of restraints. In the next example, we illustrate this on a cycle of length $4$.

\begin{example}\label{exampleC4-1} Let $G=C_4$ with $V(G)=\{u_1,u_2,u_3,u_4\}$ and  $E(G)=\{u_1u_2, u_2u_3, u_3u_4, u_4u_1\}$. Then there are exactly seven nonequivalent simple restraints on $G$ and these restraints are namely

\begin{center}
$r_{1}= [\{1\}, \{1\}, \{1\}, \{1\}]$,\\
$r_{2} = [\{1\}, \{1\}, \{1\}, \{2\}]$,\\
$r_{3}= [\{1\}, \{1\}, \{2\}, \{2\}]$,\\
$r_{4}= [\{1\}, \{2\}, \{1\}, \{2\}]$,\\
$r_{5}= [\{1\}, \{1\}, \{2\}, \{3\}]$,\\
$r_{6}= [\{1\}, \{2\}, \{1\}, \{3\}]$,\\
$r_{7} = [\{1\}, \{2\}, \{3\}, \{4\}]$.
\end{center}
Now, among these seven restraints, there are only three proper restraints and these are namely  $r_4$, $r_6$ and $r_7$. Therefore, by Theorem~\ref{neccndtn1}, the possibilities for nonequivalent restraints in $R_{\operatorname{max}}(G,k)$ reduce to $r_4$, $r_6$ and $r_7$.
\end{example}
\bigskip

We shall need a combinatorial interpretation for the fourth coefficient of the restrained chromatic polynomial. We will make use of this interpretation in the sequel in order to present another necessary condition for a restraint to be in $R_{\operatorname{max}}(G,k)$.

\begin{theorem}\label{restcoefn-3}
Let $x\geq M_{G,r}$ and $\displaystyle \pi_r(G,x)=\sum_{i=0}^n (-1)^{n-i}a_i(G,r)x^i$. Also, let $V(G)=\{u_1,\dots u_n\}$. Then
$$a_{n-3}(G,r)=A_0(G)~+~\sum_{i=1}^8A_i(G,r),$$  
where
\begin{eqnarray*}
A_0(G) & = &  {{m_G}\choose{3}}-(m_G-2)\eta_G(C_3)-i_G(C_4)+2\eta_G(K_4);\\
A_1(G,r) & =  & \sum_{i<j<k}|r(u_i)|~|r(u_j)|~|r(u_k)|;\\
A_2(G,r) & =  & (m_G-1)\sum_{i<j}|r(u_i)|~|r(u_j)|;\\
A_3(G,r) & =  & \sum_{u_iu_j\notin E(G) \atop i<j}  |r(u_i)|~|r(u_j)|;\\
A_4(G,r) & = & - \sum_{u_iu_j\in E(G)}  |r(u_i)\cap r(u_j)|  \sum _{k\notin \{i,j\}} |r(u_k)|; \\
A_5(G,r) & = &  \left(~{m_G \choose 2}~-~\eta_G(C_3)\right)\sum_{1\leq i\leq n}|r(u_i)|;\\
A_6(G,r) & = &  -(m_G-1)\sum_{u_iu_j\in E(G)}|r(u_i)\cap r(u_j)|;\\
A_7(G,r) & = &  A_7'(G,r)+A_7''(G,r) \quad \text{where}\\
A_7'(G,r) & = & \sum_{u_iu_j\in E(G)}|N_G(u_i)\cap N_G(u_j)|~|r(u_i)\cap r(u_j)|,\\
A_7''(G,r) & =  & -\sum_{u_i\in V(G)} \, \sum_{u_j,u_k \in N_G(u_i) \atop j<k}|r(u_j)\cap r(u_k)|;\\
A_8(G,r) & = & A_8'(G,r)+A_8''(G,r) \quad \text{where}\\
A_8'(G,r) & = & \frac{1}{2}\sum_{u_iu_j \in E(G)}~\sum_{k\notin \{i,j\}\atop u_k\in N_G(u_i)\cup N_G(u_j)}|r(u_i)\cap r(u_j)\cap r(u_k)|\\
A_8''(G,r) & = & \frac{1}{6}\sum_{u_iu_j\in E(G)}~ \sum_{u_k \in N_G(u_i)\cap N_G(u_j)} |r(u_i)\cap r(u_j)\cap r(u_k)|.
\end{eqnarray*}

\end{theorem}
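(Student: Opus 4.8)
The plan is to mimic exactly the inductive strategy already used to prove Theorem~\ref{restcoefn-1} and Theorem~\ref{restcoefn-2}, namely induction on the number of edges $m_G$ via the Edge Deletion-Contraction Formula. First I would establish the base case $m_G=0$: when $G$ has no edges, Proposition~\ref{emptygraphrest} gives $a_{n-3}(G,r)=S_3(A)=\sum_{i<j<k}|r(u_i)||r(u_j)||r(u_k)|$, and I would check that all terms except $A_1(G,r)$ vanish on the edgeless graph (every $A_i$ for $i\in\{4,5,6,7,8\}$ carries an edge-indexed sum, $A_0$ and $A_2$ vanish since $m_G=0$, and $A_3=\sum_{i<j}|r(u_i)||r(u_j)|=S_2(A)$ needs to be reconciled — here the subtlety is that on the edgeless graph $A_2=-\binom{0}{1}\cdots$ is actually $(m_G-1)S_2(A)=-S_2(A)$, which cancels $A_3=S_2(A)$, so the sum really does collapse to $A_1=S_3(A)$). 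Getting this cancellation right is the first thing I would verify carefully.

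For the inductive step I would fix an edge $e=uv$ and write $\pi_r(G,x)=\pi_r(G-e,x)-\pi_{r_e}(G\cdot e,x)$. By the induction hypothesis the coefficient of $x^{n-3}$ in $\pi_r(G-e,x)$ is $a_{n-3}(G-e,r)=A_0(G-e)+\sum_{i=1}^{8}A_i(G-e,r)$, and since $G\cdot e$ has $n-1$ vertices, the relevant coefficient in $\pi_{r_e}(G\cdot e,x)$ is $a_{(n-1)-2}(G\cdot e,r_e)$, which Theorem~\ref{restcoefn-2} evaluates explicitly. Thus the target reduces to a bookkeeping identity: $a_{n-3}(G,r)=a_{n-3}(G-e,r)+a_{n-3-1}^{\text{shift}}(G\cdot e,r_e)$ where the sign works out because $(-1)^{n-3}$ versus $(-1)^{(n-1)-2}=(-1)^{n-3}$ agree, so the contraction term is \emph{added}. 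The heart of the proof is then to verify, term by term, that the purely topological pieces assemble into $A_0(G)$ — exactly the coefficient identity from Theorem~\ref{chromaticcoeff} applied through deletion-contraction on the chromatic polynomial — while the restraint-weighted pieces assemble into $\sum_{i=1}^8 A_i(G,r)$.

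The main obstacle will be the careful translation of the quantities computed on $G\cdot e$ back into quantities on $G$, precisely as was done in the proof of Theorem~\ref{restcoefn-2} but now at one higher level of combinatorial complexity. Three kinds of corrections must be tracked simultaneously. First, the purely graph-theoretic bookkeeping: the relation $m_{G\cdot e}=m_G-1-|N_G(u)\cap N_G(v)|$, together with how $\eta(C_3)$, $i(C_4)$ and $\eta(K_4)$ change under deletion versus contraction, must reproduce the $A_0$ identity of Theorem~\ref{chromaticcoeff}. Second, the set-size accounting for $r_e(w)=r(u)\cup r(v)$ at the merged vertex $w$, where every occurrence of $|r(w)|$ in the $G\cdot e$ formula expands via inclusion-exclusion as $|r(u)|+|r(v)|-|r(u)\cap r(v)|$; these $-|r(u)\cap r(v)|$ corrections are what generate the edge-weighted terms $A_4,A_6,A_7$. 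Third, and most delicate, the triple-intersection terms $A_8',A_8''$ arise from pairwise intersections $|r_e(w)\cap r(u_k)|=|(r(u)\cup r(v))\cap r(u_k)|$ in the contracted graph, whose inclusion-exclusion expansion $|r(u)\cap r(u_k)|+|r(v)\cap r(u_k)|-|r(u)\cap r(v)\cap r(u_k)|$ produces exactly the triple intersections, with the factors $\tfrac12$ and $\tfrac16$ coming from the overcounting of edges $e$ whose contracted vertex is adjacent to a common neighbour $u_k$ versus a non-common neighbour. I would expect the computation isolating $A_7''$ and the two pieces of $A_8$ to be the genuinely error-prone part, since one must correctly distinguish whether $u_k\in N_G(u)\cap N_G(v)$ (a triangle on $e$) or lies in only one neighbourhood, and count how many distinct edges $e$ of a triangle/$K_4$ give rise to the same triple via contraction — this is the source of the combinatorial coefficients and where the proof genuinely has to be done rather than waved through.
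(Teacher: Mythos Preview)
Your plan is correct and follows essentially the same route as the paper's own proof: induction on $m_G$, with the base case handled via Proposition~\ref{emptygraphrest} (including the $A_2+A_3=0$ cancellation you spotted), and the inductive step carried out by applying Theorem~\ref{restcoefn-2} to $G\cdot e$ and then verifying, term by term, that each $A_i(G,r)$ equals $A_i(G-e,r)$ plus the corresponding contribution from the contracted graph. The paper organizes this last part by first isolating nine auxiliary quantities $B_0,\dots,B_8$ arising from the expansion of $a_{(n-1)-2}(G\cdot e,r_e)$ and then proving nine separate claims $A_i(G)=A_i(G-e)+B_i$; your three-way classification of the corrections (pure graph bookkeeping, pairwise inclusion--exclusion on $|r_e(w)|$, and triple-intersection terms from $|r_e(w)\cap r(u_k)|$) is exactly what drives those nine verifications, and your identification of $A_7$ and $A_8$ as the delicate cases matches where the paper's argument is longest.
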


\begin{proof}
We proceed by induction on the number of edges. First suppose that $G$ is an empty graph. We know that  $\displaystyle a_{n-3}(G,r)=A_1(G,r)$ by the formula given in Proposition \ref{emptygraphrest}. Also, it is easy to see that $A_i(G,r)=0$ for $i\notin \{1,2,3\}$, $A_2(G,r)=-\sum_{i<j}|r(u_i)|~|r(u_j)|$ and $A_3(G,r)=\sum_{i<j}|r(u_i)|~|r(u_j)|$. So the result holds for empty graphs. Suppose now that $G$ has at least one edge, say $e=u_1u_2$. First, let us define 

\begin{eqnarray*}
B_0(G,e) &=& {m_{G\cdot e}\choose 2}-\eta_{G\cdot e}(C_3);\\
B_1(G,r,e) &=& 0;\\
B_2(G,r,e) &=& \sum_{i<j}|r(u_i)||r(u_j)|;\\
B_3(G,r,e) &=& -|r(u_1)||r(u_2)|;\\
 B_4(G,r,e) &=& -|r(u_1)\cap r(u_2)|\sum_{i\notin \{1,2\}}|r(u_i)|;\\
B_5(G,r,e) &=& (m_G-1-|N_G(u_1)\cap N_G(u_2)|)\sum_{1\leq i\leq n }|r(u_i)|;\\
B_6(G,r,e) &=& -(m_G-1)|r(u_1)\cap r(u_2)|-\sum_{u_iu_j\in E(G-e)}|r(u_i)\cap r(u_j)|;\\
B_7(G,r,e) &=& |N_G(u_1)\cap N_G(u_2)||r(u_1)\cap r(u_2)|-\sum_{i,j\in\{1,2\} \atop i\neq j}~\sum_{u\in N_G(u_i)\setminus N_G[u_j]}|r(u_j)\cap r(u)|;\\
B_8(G,r,e) &=& \sum _{u_i\in N_G(u_1)\cup N_G(u_2) \atop i \notin \{1,2\}}|r(u_1)\cap r(u_2)\cap r(u_i)|.
\end{eqnarray*}

We shall begin by proving that $$ a_{n-3}(G\cdot e , r_e)=B_0(G,e)+\sum_{i=1}^8B_i(G,r,e).$$

Since $G\cdot e$ has $n-1$ vertices, by Theorem~\ref{restcoefn-2}, the coefficient of $x^{n-3}$ in $\pi_{r_e}(G\cdot e,x)$ is equal to 
\begin{eqnarray*}
&& {m_{G\cdot e}\choose{2}}-\eta_{G\cdot e}(C_3)~+\sum_{u\neq v \atop u,v\in V(G\cdot e)}|r_e(u)||r_e(v)|~+~m_{G\cdot e}\sum_{u\in V(G\cdot e)}|r_e(u)|\\
&& -\sum_{uv \in E(G\cdot e)}|r_e(u)\cap r_e(v)|.\\
\end{eqnarray*}

Now, by the definition of the restraint function $r_e$, we have
\begin{eqnarray*}
\sum_{u\neq v \atop u,v\in V(G\cdot e)}|r_e(u)||r_e(v)| &=& \sum_{3\leq i<j\leq n }|r(u_i)||r(u_j)|+\sum_{i \notin \{1,2\}}|r(u_1)\cup r(u_2)| |r(u_i)|\\
&=& \sum_{3\leq i<j\leq n}|r(u_i)||r(u_j)|+\sum_{k\in\{1,2\}}~\sum_{i\notin \{1,2\}}|r(u_k)||r(u_i)|\\
&&-|r(u_1)\cap r(u_2)|\sum_{i\notin \{1,2\}}|r(u_i)|\\
&=&B_2(G,r,e)+B_3(G,r,e)+B_4(G,r,e).
\end{eqnarray*}

Also, since $m_{G\cdot e}=m_G-1-|N_G(u_1)\cap N_G(u_2)|$ we have 
\begin{eqnarray*}
m_{G\cdot e}\sum_{u \in V(G\cdot e)}|r_e(u)|&=&\left(m_G-1-|N_G(u_1)\cap N_G(u_2)|\right)\left( \left(\sum_{1\leq i\leq n } |r(u_i)|\right)-|r(u_1)\cap r(u_2)|\right)\\
&=& B_5(G,r,e)-\left(m_G-1-|N_G(u_1)\cap N_G(u_2)|\right)|r(u_1)\cap r(u_2)|.
\end{eqnarray*}

Lastly,
\begin{eqnarray*}
-\sum_{uv\in E(G\cdot e)}|r_e(u)\cap r_e(v)|&=& -\sum_{u_iu_j\in E(G) \atop i,j\notin \{1,2\}}|r(u_i)\cap r(u_j)|\\
&& -\sum_{u_i\in N_G(u_1)\cup N_G(u_2) \atop i\notin \{1,2\} }|(r(u_1)\cup r(u_2))\cap r(u_i)|\\
&=&  -\sum_{u_iu_j\in E(G) \atop i,j\notin \{1,2\}}|r(u_i)\cap r(u_j)|\\
&& -\sum_{u_i\in N_G(u_1)\cup N_G(u_2) \atop i\notin \{1,2\}}~\sum_{k\in\{1,2\}}|r(u_k)\cap r(u_i)|\\
&&+\sum_{u_i\in N_G(u_1)\cup N_G(u_2) \atop i\notin \{1,2\}}|r(u_1)\cap r(u_2)\cap r(u_i)|\\
&=&-\sum_{u_iu_j\in E(G-e)}|r(u_i)\cap r(u_j)|\\
&&-\sum_{k,l\in\{1,2\} \atop k\neq l}~\sum_{u_i\in N_G(u_k)\setminus N_G[u_l]}|r(u_i)\cap r(u_l)|\\
&&+\sum_{u_i\in N_G(u_1)\cup N_G(u_2) \atop i\notin \{1,2\}}|r(u_1)\cap r(u_2)\cap r(u_i)|.
\end{eqnarray*}

Thus, by combining all these together we obtain that $ a_{n-3}(G\cdot e , r_e)$ is equal to $B_0(G,e)+\sum_{i=1}^8B_i(G,r,e).$\

Finally, by the edge deletion-contraction formula, it suffices to show that
\begin{eqnarray*}
 A_0(G)&=&A_0(G-e)+B_0(G,e) \quad \mbox{and}\\
 A_i(G,r)&=&A_i(G-e,r)+B_i(G,r,e) \quad \mbox{for} \quad  1\leq i \leq 8.
\end{eqnarray*}

\noindent \textbf{Claim 1:} $A_0(G)=A_0(G-e)+B_0(G,e)$.

\noindent \textit{Proof of Claim 1:} Recall that 
\begin{eqnarray*}
A_0(G) & = & {{m_G}\choose{3}}-(m_G-2)\eta_G(C_3)-i_G(C_4)+2\eta_G(K_4),\\ 
A_0(G-e) & = & {{m_{G-e}}\choose{3}}-(m_{G-e}-2)\eta_{G-e}(C_3)-i_{G-e}(C_4)+2\eta_{G-e}(K_4), \mbox{ and } \\ 
B_0(G,e) & = &  {m_{G\cdot e}\choose 2}-\eta_{G\cdot e}(C_3).
\end{eqnarray*}
By Theorem~\ref{chromaticcoeff}, the coefficient of $x^{n-3}$ in the chromatic polynomial $\pi(G,x)$ of $G$ is equal to $-A_0(G)$. Since $G\cdot e$ has $n-1$ vertices, by Theorem~\ref{chromaticcoeff}, the coefficient of $x^{n-3}$ in the chromatic polynomial $\pi(G\cdot e,x)$ of $G\cdot e$ is equal to $B_0(G,e)$. The chromatic polynomial satisfies the edge deletion-contraction formula, $\pi(G,x)=\pi(G-e,x)-\pi(G\cdot e, x)$. Therefore $-A_0(G)=-A_0(G-e)-B_0(G,e)$ and the result follows.\\

\noindent \textbf{Claim 2:} $A_1(G,r)=A_1(G-e,r)+B_1(G,r,e)$.

\noindent \textit{Proof of Claim 2:} Recall that $\displaystyle A_1(G,r) = A_1(G-e,r)= \sum_{i<j<k}|r(u_i)|~|r(u_j)|~|r(u_k)|$ and $B_1(G,r,e)=0$. Since $G$ and $G- e$ have the same vertices, $A_1(G,r)$ is equal to  $A_1(G-e,r)$. Now the result follows since $B_1(G,r,e)=0$.\\

\noindent \textbf{Claim 3:} $A_2(G,r)=A_2(G-e,r)+B_2(G,r,e)$.

\noindent \textit{Proof of Claim 3:} Recall that 
\begin{eqnarray*}A_2(G,r) & =  & (m_G-1)\sum_{i<j}|r(u_i)|~|r(u_j)|,\\
A_2(G-e,r) & =  & (m_{G-e}-1)\sum_{i<j}|r(u_i)|~|r(u_j)|, \mbox{ and }\\
B_2(G,r,e) & =  & \sum_{i<j}|r(u_i)||r(u_j)|.
\end{eqnarray*}

\noindent Now, $A_2(G-e,r)$ is equal to $(m_G-2)\sum_{i<j}|r(u_i)|~|r(u_j)|$ since $G-e$ has $m_G-1$ edges.\\

\noindent \textbf{Claim 4:} $A_3(G,r)=A_3(G-e,r)+B_3(G,r,e)$.

\noindent \textit{Proof of Claim 4:} Recall that 
\begin{eqnarray*}
A_3(G,r) & =  & \sum_{u_iu_j\notin E(G) \atop i<j}  |r(u_i)|~|r(u_j)|,\\
A_3(G-e,r) & =  & \sum_{u_iu_j\notin E(G-e) \atop i<j}  |r(u_i)|~|r(u_j)|, \mbox{ and }\\
B_3(G,r,e) & =  & -|r(u_1)||r(u_2)|.
\end{eqnarray*}

\noindent The result holds because $E(G)=E(G-e)\cup \{e\}$ and the vertices of $e$ are $u_1$ and $u_2$.\\

\noindent \textbf{Claim 5:} $A_4(G,r)=A_4(G-e,r)+B_4(G,r,e)$.

\noindent \textit{Proof of Claim 5:} Recall that
\begin{eqnarray*}
A_4(G,r) & = & - \sum_{u_iu_j\in E(G)}  |r(u_i)\cap r(u_j)|  \sum _{k\notin \{i,j\}} |r(u_k)|,\\
A_4(G-e,r) & = & - \sum_{u_iu_j\in E(G-e)}  |r(u_i)\cap r(u_j)|  \sum _{k\notin \{i,j\}} |r(u_k)|, \mbox{ and }\\ 
B_4(G,r,e)  & =  & -|r(u_1)\cap r(u_2)|\sum_{i\notin \{1,2\}}|r(u_i)|.
\end{eqnarray*}

\noindent Again, as in the previous case, the result holds because $E(G)=E(G-e)\cup \{e\}$ and the vertices of $e$ are $u_1$ and $u_2$.\\

\noindent \textbf{Claim 6:} $A_5(G,r)=A_5(G-e,r)+B_5(G,r,e)$:

\noindent \textit{Proof of Claim 6:} Recall that 
\begin{eqnarray*}
A_5(G,r) & =  & \left(~{m_G \choose 2}~-~\eta_G(C_3)\right)\sum_{1\leq i\leq n}|r(u_i)|,\\
A_5(G-e,r) & =  & \left(~{m_{G-e} \choose 2}~-~\eta_{G-e}(C_3)\right)\sum_{1\leq i\leq n}|r(u_i)|, \mbox{ and }\\
B_5(G,r,e)  & = &  (m_G-1-|N_G(u_1)\cap N_G(u_2)|)\sum_{1\leq i\leq n }|r(u_i)|.
\end{eqnarray*}

The number of triangles in $G$ is equal to $\eta_G(C_3)$. Observe that $\eta_{G-e}(C_3)$ is the number of triangles in $G$ which does not contain the edge $e$ and $|N_G(u_1)\cap N_G(u_2)|$ is the number of triangles in $G$ which contains the edge $e$. Therefore, $\eta_G(C_3)$ is equal to $\eta_{G-e}(C_3)+|N_G(u_1)\cap N_G(u_2)|$. Also, it is easy to check that ${m_G \choose 2}$ is equal to ${m_{G-e} \choose 2}+m_G-1$ as $m_{G-e}$ is equal to $m_G-1$. Hence, the equality is obtained.\\

\noindent \textbf{Claim 7:} $A_6(G,r)=A_6(G-e,r)+B_6(G,r,e)$:

\noindent \textit{Proof of Claim 7:} Recall that 
\begin{eqnarray*}
A_6(G,r)  & =  & -(m_G-1)\sum_{u_iu_j\in E(G)}|r(u_i)\cap r(u_j)|,\\
A_6(G-e,r)  & =  & -(m_{G-e}-1)\sum_{u_iu_j\in E(G-e)}|r(u_i)\cap r(u_j)|, \mbox{ and }\\
B_6(G,r,e)  & =  & -(m_G-1)|r(u_1)\cap r(u_2)|-\sum_{u_iu_j\in E(G-e)}|r(u_i)\cap r(u_j)|
\end{eqnarray*}

\noindent The reason why the equality holds is the same as in the proofs of Claims $4$ and $5$.\\

\noindent \textbf{Claim 8:} $A_7(G,r)=A_7(G-e,r)+B_7(G,r,e)$:

\noindent \textit{Proof of Claim 8:} Recall that $A_7(G,r)$ is equal to

\[\sum_{u_iu_j\in E(G)}|N_G(u_i)\cap N_G(u_j)|~|r(u_i)\cap r(u_j)| -\sum_{u_i\in V(G)} \, \sum_{u_j,u_k \in N_G(u_i) \atop j<k}|r(u_j)\cap r(u_k)|\,\]

\noindent $A_7(G-e,r)$ is equal to

\[ \sum_{u_iu_j\in E(G-e)}|N_{G-e}(u_i)\cap N_{G-e}(u_j)|~|r(u_i)\cap r(u_j)| -\sum_{u_i\in V(G-e)}  \sum_{u_j,u_k \in N_{G-e}(u_i) \atop j<k}|r(u_j)\cap r(u_k)|\] 
and $B_7(G,r,e)$ is equal to

\[ |N_G(u_1)\cap N_G(u_2)||r(u_1)\cap r(u_2)|-\sum_{i,j\in\{1,2\} \atop i\neq j}~\sum_{u\in N_G(u_i)\setminus N_G[u_j]}|r(u_j)\cap r(u)|.\]

\noindent Observe that $N_G(u_i)=N_{G-e}(u_i)$ for $i\notin \{1,2\}$. Also, $N_G(u_1)\setminus N_{G-e}(u_1)=\{u_2\}$ and $N_G(u_2)\setminus N_{G-e}(u_2)=\{u_1\}$. Therefore, $\displaystyle \sum_{u_iu_j \in E(G)}|N_G(u_i)\cap N_G(u_j)||r(u_i)\cap r(u_j)|$ 

is equal to 
\begin{eqnarray*}
&& \sum_{u_iu_j \in E(G-e)}|N_{G-e}(u_i)\cap N_{G-e}(u_j)||r(u_i)\cap r(u_j)| \\
&& +\sum_{u\in N_G(u_1)\cap N_G(u_2)}(|r(u)\cap r(u_1)|+|r(u)\cap r(u_2)|)\\
&&+|N_G(u_1)\cap N_G(u_2)|~|r(u_1)\cap r(u_2)|.\\
\end{eqnarray*}

Moreover, 
\[ \sum_{u_i\in V(G)}~\sum_{u_j,u_k\in N_G(u_i) \atop j<k}|r(u_j)\cap r(u_k)|\] 
is equal to 
\[ \sum_{u_i\in V(G-e)}~\sum_{u_k,u_j\in N_{G-e}(u_i) \atop j<k}|r(u_k)\cap r(u_j)| +\sum_{s,t\in \{1,2\} \atop s\neq t}~ \sum_{u\in N_G(u_s)\setminus \{u_t\}} |r(u)\cap r(u_t)|.\]
Hence, the result follows since the difference of the sums
\[ \sum_{s,t\in \{1,2\} \atop s\neq t}~ \sum_{u\in N_G(u_s)\setminus \{u_t\}} |r(u)\cap r(u_t)| \, -\sum_{u\in N_G(u_1)\cap N_G(u_2)}(|r(u)\cap r(u_1)|+|r(u)\cap r(u_2)|)\]
can be rearranged as \[ \sum_{i,j\in\{1,2\} \atop i\neq j}~\sum_{u\in N_G(u_i)\setminus N_G[u_j]}|r(u_j)\cap r(u)|.\]


\vspace{0.1in}

\noindent \textbf{Claim 9:} $A_8(G,r)=A_8(G-e,r)+B_8(G,r,e)$:\

\noindent \textit{Proof of Claim 9:} Recall that  $A_8(G,r)$ is equal to

\begin{eqnarray*}
&&\frac{1}{2}\sum_{u_iu_j \in E(G)}~\sum_{k\notin \{i,j\}\atop u_k\in N_G(u_i)\cup N_G(u_j)}|r(u_i)\cap r(u_j)\cap r(u_k)|\\
&&+\frac{1}{6}\sum_{u_iu_j\in E(G)}~ \sum_{u_k \in N_G(u_i)\cap N_G(u_j)} |r(u_i)\cap r(u_j)\cap r(u_k)|,
\end{eqnarray*}
$A_8(G-e,r)$ is equal to
\begin{eqnarray*}
&&\frac{1}{2}\sum_{u_iu_j \in E(G-e)}~\sum_{k\notin \{i,j\}\atop u_k\in N_{G-e}(u_i)\cup N_{G-e}(u_j)}|r(u_i)\cap r(u_j)\cap r(u_k)|\\
&&+\frac{1}{6}\sum_{u_iu_j\in E(G-e)}~ \sum_{u_k \in N_{G-e}(u_i)\cap N_{G-e}(u_j)} |r(u_i)\cap r(u_j)\cap r(u_k)|
\end{eqnarray*}
and $B_8(G,r,e)$ is equal to 
\begin{eqnarray*} 
&&\sum _{u_i\in N_G(u_1)\cup N_G(u_2) \atop i \notin \{1,2\}}|r(u_1)\cap r(u_2)\cap r(u_i)|.
\end{eqnarray*}

It suffices to check two equalities. First,
$$ \frac{1}{2}\sum_{u_iu_j \in E(G)}~\sum_{k\notin \{i,j\}\atop u_k\in N_G(u_i)\cup N_G(u_j)}|r(u_i)\cap r(u_j)\cap r(u_k)|$$ is equal to 
\begin{eqnarray*}
&&\frac{1}{2}\sum_{u_iu_j \in E(G-e)}~\sum_{k\notin \{i,j\}\atop u_k\in N_{G-e}(u_i)\cup N_{G-e}(u_j)}|r(u_i)\cap r(u_j)\cap r(u_k)|\\
&&+\sum_{k\notin \{1,2\} \atop u_k\in (N_G(u_1)\cup N_G(u_2))\setminus (N_G(u_1)\cap N_G(u_2))} |r(u_1)\cap r(u_2)\cap r(u_k)|\\
&&+~\frac{1}{2}\sum_{u\in N_G(u_1)\cap N_G(u_2)}|r(u_1)\cap r(u_2)\cap r(u)|.
\end{eqnarray*}
Secondly, 
$$\frac{1}{6}\sum_{u_iu_j\in E(G)}~ \sum_{u_k \in N_G(u_i)\cap N_G(u_j)} |r(u_i)\cap r(u_j)\cap r(u_k)|$$ is equal to
\begin{eqnarray*}
&&\frac{1}{6}\sum_{u_iu_j\in E(G-e)}~ \sum_{u_k \in N_{G-e}(u_i)\cap N_{G-e}(u_j)} |r(u_i)\cap r(u_j)\cap r(u_k)|\\
&&+~\frac{1}{2}\sum_{u\in N_G(u_1)\cap N_G(u_2)}|r(u_1)\cap r(u_2)\cap r(u)|.
\end{eqnarray*}
Therefore, the result is established.
\end{proof}

\begin{theorem}\label{neccndtn2}
Let G be any graph. If $r^* \in R_{\operatorname{max}}(G,k)$ then $r^*$ satisfies both of the following.
\begin{itemize}
\item[(i)] $r^*$ is a proper restraint,
\item[(ii)] $A_7''(G,r^*)=\operatorname{min}\{A_7''(G,r): \, \text{r is a proper $k$-restraint on G}\}$. In other words,
$$\sum_{u\in V(G)} \sum_{v,w\in N_G(u) \atop v\neq w}|r^*(v)\cap r^*(w)|\geq \sum_{u\in V(G)} \sum_{v,w\in N_G(u) \atop v\neq w}|r(v)\cap r(w)|$$
for every proper k-restraint r on G.
\end{itemize}
\end{theorem}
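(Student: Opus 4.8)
The plan is to decide the extremal comparison coefficient by coefficient, exactly as in the proof of Theorem~\ref{constantrestraint}, but pushing one coefficient deeper. Condition (i) is immediate, since every $r^*\in R_{\operatorname{max}}(G,k)$ is proper by Theorem~\ref{neccndtn1}. For condition (ii) I would fix such an $r^*$ and compare it, through the lexicographic ordering of the coefficients of $\pi_r(G,x)=\sum_{i=0}^n(-1)^{n-i}a_i(G,r)x^i$, against an arbitrary proper $k$-restraint $r$.

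First I would check that any two proper $k$-restraints agree on the top three coefficients. Both polynomials are monic, so $a_n=1$. By Theorem~\ref{restcoefn-1}, $a_{n-1}(G,r)=m_G+kn$ depends only on $G$ and $k$. By Theorem~\ref{restcoefn-2}, the only part of $a_{n-2}(G,r)$ that involves $r$ is $-\sum_{u_iu_j\in E(G)}|r(u_i)\cap r(u_j)|$, and this vanishes for proper restraints; hence $a_{n-2}(G,r)$ too is a constant depending only on $G$ and $k$. Therefore the contest between two proper $k$-restraints is settled at the level of $a_{n-3}$.

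The key step is to isolate the $r$-dependence of $a_{n-3}(G,r)$ for proper restraints, using the decomposition $a_{n-3}=A_0+\sum_{i=1}^8A_i$ of Theorem~\ref{restcoefn-3}. I would run through the nine summands: $A_0(G)$ depends only on $G$; and because $|r(u_i)|=k$ at every vertex, each of $A_1,A_2,A_3,A_5$ reduces to a fixed multiple of a power of $k$, so all four are determined by $G$ and $k$ alone. Every remaining summand except $A_7''$ — namely $A_4$, $A_6$, $A_7'$, $A_8'$ and $A_8''$ — carries a factor $|r(u_i)\cap r(u_j)|$ over an edge $u_iu_j$ (or an intersection contained in such a set), and hence vanishes identically for proper restraints. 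Thus for every proper $k$-restraint $a_{n-3}(G,r)=C_{G,k}+A_7''(G,r)$, where $C_{G,k}$ is independent of the restraint and $A_7''(G,r)=-\sum_{u\in V(G)}\sum_{v,w\in N_G(u),\,v<w}|r(v)\cap r(w)|$ is the sole surviving $r$-dependent term; the reason it survives is that the pairs $v,w$ summed here are common neighbours of a vertex and need not be adjacent, so properness cannot force this sum to vanish.

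Finally I would close with the lexicographic argument. Since $r^*$ and $r$ share $a_n,a_{n-1},a_{n-2}$, the difference $\pi_r(G,x)-\pi_{r^*}(G,x)$ has leading term $-(a_{n-3}(G,r)-a_{n-3}(G,r^*))x^{n-3}$, so for large $x$ the restraint with the \emph{smaller} value of $a_{n-3}$ permits more colourings. As $r^*\in R_{\operatorname{max}}(G,k)$, no proper $r$ can beat it, forcing $a_{n-3}(G,r)\geq a_{n-3}(G,r^*)$ and hence $A_7''(G,r)\geq A_7''(G,r^*)$ for every proper $k$-restraint $r$; that is, $r^*$ minimizes $A_7''$. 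Unwrapping the definition of $A_7''$, and noting that the sum over $v<w$ differs from the sum over $v\neq w$ in the statement only by a harmless factor of two, yields precisely inequality (ii). The only genuine bookkeeping obstacle is the term-by-term analysis of the third paragraph — confirming that $A_7''$ is the unique non-constant contribution — but each case is a one-line consequence of $|r(u_i)|=k$ and of $|r(u_i)\cap r(u_j)|=0$ on edges.
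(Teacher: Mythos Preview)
Your proposal is correct and follows essentially the same line as the paper's own proof: establish (i) via Theorem~\ref{neccndtn1}, show that proper $k$-restraints share $a_n,a_{n-1},a_{n-2}$, decompose $a_{n-3}$ via Theorem~\ref{restcoefn-3}, observe that all summands except $A_7''$ are either independent of $r$ (for $k$-restraints) or vanish (for proper restraints), and conclude that $r^*\in R_{\operatorname{max}}(G,k)$ forces $A_7''(G,r^*)$ to be minimal. Your exposition is, if anything, slightly more explicit than the paper's in tracking the sign in the leading term of $\pi_r-\pi_{r^*}$.
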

\begin{proof}
By Theorem~\ref{neccndtn1}, we know that $r^*$ is a proper restraint. So we shall prove the statement in $(ii)$. Let $r$ be a proper $k$-restraint on $G$. Note that $a_n(G,r)=a_n(G,r^*)=1$ as the restrained chromatic polynomial is a monic polynomial. By  Theorem~\ref{restcoefn-1}, we have  $a_{n-1}(G,r)=a_{n-1}(G,r^*)$ as $r$ and $r^*$ are $k$-restraints. Also, since $r$ and $r^*$ are proper restraints we have 
$$\sum_{uv\in E(G)}|r(u)\cap r(v)|=\sum_{uv\in E(G)}|r^*(u)\cap r^*(v)|=0.$$ So, $a_{n-2}(G,r)=a_{n-2}(G,r^*)$ by Theorem~\ref{restcoefn-2}. Since $r^*\in R_{\operatorname{max}}(G,k) $ and the coefficient of $x^{n-3}$ of the restrained chromatic polynomial is negative, we must have $a_{n-3}(G,r)\geq a_{n-3}(G,r^*)$. Recall that  $$a_{n-3}(G,r)=A_0(G)+\sum_{i=1}^8 A_i(G,r)$$ where $A_i(G,r)$'s are as in the statement of Theorem~\ref{restcoefn-3}. First note that $A_0(G)$ does not depend on the restraint function. Furthermore, since $r$ and $r^*$ are $k$-restraints, $A_i(G,r)=A_i(G,r^*)$ for $i=1,2,3,5$. Also, since $r$ and $r^*$ are proper restraints, we have $A_i(G,r)=A_i(G,r^*)=0$ for $i=4,6,8$ and $A_{7}'(G,r)=A_7'(G,r^*)=0$. Thus, $0\leq a_{n-3}(G,r)-a_{n-3}(G,r^*)=A_{7}''(G,r)-A_{7}''(G,r^*)$ and the result follows.
\end{proof}

\begin{example} Let us consider again the graph $C_4$. In Example~\ref{exampleC4-1}, we noted that if $r\in R_{\operatorname{max}}(G,k)$ then $r\in \{r_4,r_6,r_7\}$ where $r_{4}= [\{1\}, \{2\}, \{1\}, \{2\}]$, $r_{6}= [\{1\}, \{2\}, \{1\}, \{3\}]$ and $r_{7} = [\{1\}, \{2\}, \{3\}, \{4\}]$. Now, we apply Theorem~\ref{neccndtn2} to determine $R_{\operatorname{max}}(G,k)$.  We calculate
\begin{center}
$A_7''(G,r_4)=-2$,\\
$A_7''(G,r_6)=-1$,\\
$A_7''(G,r_7)=0$.
\end{center}
Thus, we conclude that $r\in R_{\operatorname{max}}(G,k)$ if and only if $r\simeq r_4$.
\end{example}

In the next theorem, in fact, we will show that  the necessary conditions in Theorem~\ref{neccndtn2} become sufficient to determine the extremal restraints for all bipartite graphs.\bigskip

Suppose $G$ is a connected bipartite graph with bipartition $(V_1,V_2)$. Then a  $k$-restraint is called an \textit{alternating restraint}, denoted $r_{alt}$, if $r_{alt}$ is constant on both $V_1$ and $V_2$ individually (that is, $r_{alt}(a)=r_{alt}(a')$ for every $a,a'\in V_i$ for $i=1,2$), and $r_{alt}(u)\cap r_{alt}(v)= \emptyset$ for every $u\in V_1$ and $v\in V_2$.

\begin{theorem}\label{bipartiterestraint}
Let $G$ be a connected bipartite graph. Then,
 $r\in R_{\operatorname{max}}(G,k)$ if and only if $r\simeq r_{alt}$.
\end{theorem}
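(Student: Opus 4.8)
The plan is to read off the characterization directly from the necessary conditions of Theorem~\ref{neccndtn2}. Write $(V_1,V_2)$ for the bipartition of $G$ and set
\[
S(r)=\sum_{u\in V(G)}\ \sum_{\substack{v,w\in N_G(u)\\ v\neq w}}|r(v)\cap r(w)|,
\]
so that condition $(ii)$ says precisely that an element of $R_{\operatorname{max}}(G,k)$ is a proper $k$-restraint maximizing $S$. I would first reorganize $S$ by grouping over pairs of vertices instead of over their common neighbours: setting $c(v,w)=|N_G(v)\cap N_G(w)|$, one gets $S(r)=2\sum_{\{v,w\}}c(v,w)\,|r(v)\cap r(w)|$, the sum running over unordered pairs of distinct vertices. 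The key bipartite feature is that two vertices with a common neighbour must lie on the same side of $(V_1,V_2)$, so only same-side pairs contribute.

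Next I would establish a uniform upper bound. Since $|r(v)\cap r(w)|\le k$ for every pair and $\sum_{\{v,w\}}c(v,w)=\sum_{u}\binom{\deg_G u}{2}$, we obtain $S(r)\le 2k\sum_{u}\binom{\deg_G u}{2}$ for every $k$-restraint. I would then verify that the alternating restraint $r_{alt}$ attains this bound: because $r_{alt}$ is constant and equal on any two same-side vertices, each contributing term $|r(v)\cap r(w)|$ equals $k$. Hence $r_{alt}$ maximizes $S$ among proper $k$-restraints, and, more importantly, equality in the bound forces $r(v)=r(w)$ (equivalently $|r(v)\cap r(w)|=k$) for every pair $v,w$ that shares a common neighbour.

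The heart of the argument is to upgrade this local agreement into the global alternating structure, and this is where connectivity of $G$ enters; I expect this to be the step demanding the most care. Given two vertices $v,v'$ on the same side, connectedness of $G$ provides a path between them; since both endpoints lie in the same part, the path has even length, and its even-indexed vertices $v=y_0,y_1,\dots,y_m=v'$ all lie on that side, with $y_{i-1}$ and $y_i$ sharing the common neighbour lying between them. Applying the agreement condition along the path gives $r(v)=r(v')$, so $r$ is constant on $V_1$ and, symmetrically, constant on $V_2$. Properness together with the existence of at least one edge then forces these two constant colour sets to be disjoint, so $r$ is an alternating restraint.

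Finally I would close the logical loop. The steps above show that every proper $k$-restraint maximizing $S$ is an alternating restraint, and conversely $r_{alt}$ is such a maximizer; moreover any two alternating restraints are equivalent (use the identity automorphism and a bijection matching the two colour sets of one to those of the other), hence share the same restrained chromatic polynomial for large $x$. By Theorem~\ref{neccndtn2}, every $r\in R_{\operatorname{max}}(G,k)$ is proper and maximizes $S$, giving $R_{\operatorname{max}}(G,k)\subseteq\{r:r\simeq r_{alt}\}$. Since $R_{\operatorname{max}}(G,k)$ is nonempty and every restraint equivalent to $r_{alt}$ carries the same (therefore maximal) polynomial as any element of $R_{\operatorname{max}}(G,k)$, the reverse inclusion follows, yielding $r\in R_{\operatorname{max}}(G,k)$ if and only if $r\simeq r_{alt}$.
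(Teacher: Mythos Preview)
Your proposal is correct and follows essentially the same route as the paper: both reduce the problem, via Theorem~\ref{neccndtn2}, to showing that among proper $k$-restraints the sum $S(r)$ is maximized precisely by the alternating restraints, using that $|r(v)\cap r(w)|\le k$ termwise with equality propagated across each bipartition class by connectivity. Your write-up is more explicit than the paper's---you spell out the upper bound $S(r)\le 2k\sum_u\binom{\deg_G u}{2}$, the even-path argument for propagating $r(v)=r(w)$ along a side, and the nonemptiness step closing the reverse inclusion---but these are elaborations of the same idea rather than a different method.
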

\begin{proof}
By Theorem~\ref{neccndtn2}, it suffices to show that for any proper $k$-restraint $r$ such that $r\not\simeq r_{alt}$, 
$$\sum_{u\in V(G)} \sum_{v,w\in N_G(u) \atop v\neq w}|r_{alt}(v)\cap r_{alt}(w)|> \sum_{u\in V(G)} \sum_{v,w\in N_G(u) \atop v\neq w}|r(v)\cap r(w)|.$$
Let $r$ be a  proper $k$-restraint such that $r\not\simeq r_{alt}$. Then there exist distinct vertices $u,v,w$ such that $v,w\in N_G(u)$, and $|r(v)\cap r(w)|<k$, as $G$ is a connected graph. Thus, the result follows since $|r(v)\cap r(w)|=k$ for every $u,v,w$ such that $v,w\in N_G(u)$ and $v\neq w$.
\end{proof}

\begin{corollary}\label{bipartiterestraintcorollary}Let $G$ be a  bipartite graph. Then,
 $r\in R_{\operatorname{max}}(G,k)$ if and only if $r$ induces an alternating restraint on each connected component of $G$.
\end{corollary}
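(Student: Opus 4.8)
The plan is to deduce this corollary from the connected case, Theorem~\ref{bipartiterestraint}, together with the multiplicativity of the restrained chromatic polynomial over connected components noted at the end of the preliminaries. First I would observe that if $G$ is bipartite then each of its connected components $G_1,\dots,G_t$ is itself connected and bipartite, so that Theorem~\ref{bipartiterestraint} applies to each $G_i$ separately.

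The key step is the factorization $\pi_r(G,x)=\prod_{i=1}^t\pi_r(G_i,x)$. Each factor is a monic polynomial that is positive for all large $x$, and the restraint $r$ assigns colours to the vertices of distinct components independently; hence the product is maximized, for all sufficiently large $x$, exactly when every factor is maximized. This is precisely the reduction ``$r$ is extremal for $G$ if and only if $r$ induces an extremal restraint on each connected component'' already observed in the preliminaries, which I would invoke directly. Applying Theorem~\ref{bipartiterestraint} to each connected bipartite $G_i$ then shows that the restriction of $r$ to $G_i$ is extremal if and only if it is equivalent to $r_{alt}$.

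I do not expect a serious obstacle, since the corollary is essentially bookkeeping layered on Theorem~\ref{bipartiterestraint}; the only points meriting a line of care are two small reconciliations. First, a $k$-restraint on $G$ draws forbidden colours from $\{1,\dots,kn\}$, whereas a $k$-restraint on $G_i$ viewed in isolation uses $\{1,\dots,kn_i\}$; since any $k$-restraint on $G_i$ involves at most $kn_i$ distinct colours and $\pi_r$ depends only on the equivalence class of $r$, this discrepancy is never binding and the two notions of extremality on $G_i$ agree. Second, to pass from the statement ``the restriction of $r$ to $G_i$ is equivalent to $r_{alt}$'' to the statement ``$r$ induces an alternating restraint on $G_i$'', I would use that a connected bipartite graph has a unique bipartition and that each of its automorphisms either fixes or swaps the two parts; this forces the restriction of $r$ to be constant on each part, hence alternating. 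Combining the component-wise equivalences then yields the stated characterization.
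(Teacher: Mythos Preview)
Your approach is correct and is essentially the one the paper intends: the corollary is stated without its own proof, relying on the component-wise reduction at the end of the preliminaries together with Theorem~\ref{bipartiterestraint} for each connected bipartite piece. Your two reconciliation remarks (about the palette $\{1,\dots,kn\}$ versus $\{1,\dots,kn_i\}$, and about why equivalence to $r_{alt}$ forces the restriction itself to be alternating via the unique bipartition of a connected bipartite graph) are valid and in fact tighten points the paper leaves implicit.
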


\section{Concluding Remarks}

We have seen that the conditions given in Theorem~\ref{neccndtn2} are sufficient to determine $R_{\operatorname{max}}(G,k)$ when $G$ is a bipartite graph. However these conditions are not sufficient in general to determine the extremal restraints. For example, let $G$ be equal to $C_7$. It is easy to check that if $r$ is a proper simple restraint on $G$ then $|A_7''(G,r)|\leq 4$. Furthermore, for a simple proper restraint $r$ on $G$, $|A_7''(G,r)|= 4$ if and only if  $r$ is equivalent to either $r_1=[ \{1\},\{2\},\{1\},\{2\},\{1\},\{2\},\{3\}]$ or $r_2=[\{1\},\{2\},\{1\},\{2\},\{3\},\{1\},\{3\}]$ (see Figure~\ref{C7figure}). Computer aided computations show that 

$$\pi_{r_1}(G,x)={x}^{7}-14\,{x}^{6}+91\,{x}^{5}-353\,{x}^{4}+879\,{x}^{3}-1404\,{x}^{2
}+1333\,x-581
$$

\noindent and 

$$\pi_{r_2}(G,x)={x}^{7}-14\,{x}^{6}+91\,{x}^{5}-353\,{x}^{4}+880\,{x}^{3}-1411\,{x}^{2
}+1352\,x-600.$$

Therefore, $\pi_{r_2}(G,x)>\pi_{r_1}(G,x)$ for all large enough $x$ and $R_{\operatorname{max}}(G,1)$ consists of restraints which are equivalent to $r_2$. Thus, Theorem~\ref{neccndtn2} cannot determine $R_{\operatorname{max}}(G,1)$ when $G$ is equal to $C_7$. So it remains open to determine $R_{\operatorname{max}}(G,k)$ when $G$ is an odd cycle. For $k=1$, we propose the following conjecture.
 
 \begin{conjecture}  Let $C_n$ be an odd cycle with vertex set $\{v_1,v_2,\dots , v_n\}$ and edge set $\{v_1v_2, v_2v_3, \dots , v_{n-1}v_n, v_1v_n\}$. If $r\in R_{\operatorname{max}}(C_n,1)$, then $r\cong r^*$ where $r^*$ is defined by
 
 \[
r^*(v_i) =
\begin{cases}
1 & \text{if } i\in \{1,3,5,\dots , \frac{n-1}{2}\}\\
2 & \text{if } i\in \{2,4,6,\dots , \frac{n-3}{2}\}\cup \{\frac{n+3}{2}, \frac{n+7}{2}, \frac{n+11}{2},\dots , n\}\\
3 & \text{if } i \in \{\frac{n+1}{2}, \frac{n+5}{2}, \frac{n+9}{2}, \dots , n-1\}
\end{cases}.
\]
 
 \end{conjecture}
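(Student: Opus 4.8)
The plan is to combine the two necessary conditions of Theorem~\ref{neccndtn2} with a finer analysis of the next coefficient of the restrained chromatic polynomial. Fix an odd $n$ and write $c(v_i)$ for the single colour forbidden at $v_i$. By Theorem~\ref{neccndtn1} any $r\in R_{\operatorname{max}}(C_n,1)$ is a proper simple restraint, so $c$ is a proper colouring of $C_n$ (adjacent vertices carry distinct forbidden colours). Since every vertex of $C_n$ has exactly two neighbours, condition $(ii)$ of Theorem~\ref{neccndtn2} says that such an $r$ must maximize
$$M(c):=\sum_{i=1}^n\bigl|r(v_{i-1})\cap r(v_{i+1})\bigr|=\#\{\,i:\ c(v_{i-1})=c(v_{i+1})\,\}$$
(indices mod $n$), the number of pairs of vertices at distance two receiving the same colour. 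So the first task is to solve this auxiliary extremal problem and describe its optimizers.

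Because $n$ is odd, the pairs $\{v_{i},v_{i+2}\}$ form a single $n$-cycle $D$ (the distance-two cycle, as $\gcd(2,n)=1$), and $M(c)$ is exactly the number of monochromatic edges of $D$. Reading $c$ cyclically along $D$, let $\delta=n-M(c)$ be the number of colour changes; on a cycle $\delta$ is either $0$ or at least $2$. But $\delta=0$ forces $c$ constant and $\delta=2$ forces $c$ to use only two colours, and neither can happen for a proper colouring of an odd cycle. Hence $\delta\ge 3$, so $M(c)\le n-3$, with equality precisely when $c$ is constant on each of exactly three arcs of $D$, coloured with three distinct colours (so $c$ uses exactly three colours, each on one arc). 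Translating the properness of $c$ on $C_n$ into the $D$-coordinates shows each arc must have length at most $(n-1)/2$, and conversely any such three-arc colouring is proper; up to equivalence a candidate is therefore determined by the multiset of the three arc (equivalently colour-class) sizes $\ell_1+\ell_2+\ell_3=n$ with $\ell_i\le (n-1)/2$. A direct check identifies $r^*$ with the balanced candidate, in which the $\ell_i$ are as equal as possible.

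All these candidates share the coefficients of $x^{n},\dots,x^{n-3}$: in the formula of Theorem~\ref{restcoefn-3} the only restraint-dependent contribution to $a_{n-3}$ is $A_7''$, which equals $-(n-3)$ for every candidate, while $A_4,A_6,A_7',A_8$ vanish for proper restraints and $A_1,A_2,A_3,A_5$ depend only on the sizes $|r(v_i)|=1$. Consequently, for large $x$ the ordering of the polynomials $\pi_r(C_n,x)$ is decided by the coefficient of $x^{n-4}$, namely $a_{n-4}(C_n,r)$, which enters with positive sign; the whole statement thus reduces to showing that $a_{n-4}$ is strictly maximized, among the three-arc candidates, by the balanced one $r^*$. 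The data in the concluding remarks is exactly this comparison at its smallest instance: for $C_7$ the two tied candidates have $D$-arc sizes $\{3,2,2\}$ and $\{3,3,1\}$, and their $x^{3}$-coefficients $880$ and $879$ differ by $1$, which is precisely the difference $16-15$ of $\sum_{i<j}\ell_i\ell_j$ on the two multisets. This suggests that the restraint-dependent part of $a_{n-4}$ is, up to an $n$-only constant, the number of bichromatic vertex pairs $\sum_{i<j}\ell_i\ell_j=\binom{n}{2}-\sum_i\binom{\ell_i}{2}$; since $\sum_i\ell_i=n$ is fixed, this is uniquely maximized by balancing the $\ell_i$, and the balanced multiset is realizable (its parts are at most $(n-1)/2$ for all odd $n\ge 3$) and equals $r^*$.

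The main obstacle is precisely this last coefficient, as the paper's formulas stop at $a_{n-3}$. I would pursue one of two routes. The first is to extend the deletion-contraction bookkeeping of Theorem~\ref{restcoefn-3} one further step to obtain a general formula for the fifth coefficient, then specialize to $C_n$, where the proper-restraint hypothesis annihilates almost all overlap terms and should leave a restraint-dependent remainder equal to $\sum_{i<j}\ell_i\ell_j$ up to a constant. The second route bypasses general formulas: compute $\pi_r(C_n,x)$ for these restraints directly by inclusion-exclusion over the forbidden-colour events (equivalently, by reducing the cycle to paths via deletion-contraction or a transfer matrix), which yields a closed form whose $x^{n-4}$-coefficient is explicit in the arc sizes. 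In either route the delicate work is to isolate the genuinely balance-dependent term from the numerous new triple-overlap and distance-pattern contributions that first appear at order $x^{n-4}$, to confirm the comparison is strict so that $r^*$ is the unique maximizer up to equivalence, and to reconcile the residue classes $n\equiv 1,3\pmod 4$ when matching the balanced composition to the explicit description of $r^*$.
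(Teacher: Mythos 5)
First, note that the paper does not prove this statement at all: it is posed as an open conjecture in the concluding remarks, supported only by the computer-aided comparison of two restraints on $C_7$ and the suggestion that the fifth-coefficient formula from \cite{ayselthesis} might settle it. So there is no proof to match; your proposal must stand on its own. Its first half does: the reduction via Theorems~\ref{neccndtn1} and \ref{neccndtn2} is sound and well executed. For $C_n$ with $n$ odd, condition $(ii)$ is exactly maximizing the number of monochromatic edges of the distance-two cycle $D$; your argument that the number of colour changes along $D$ is at least $3$ (since $\delta\in\{0,2\}$ would force at most two colours on an odd cycle), that equality $M(c)=n-3$ forces three monochromatic $D$-arcs in three distinct colours, and that properness on $C_n$ is equivalent to each arc having length at most $(n-1)/2$, is correct, and it correctly recovers the paper's observation that $|A_7''(C_7,r)|\le 4$ with equality exactly for $r_1,r_2$ (arc multisets $\{3,3,1\}$ and $\{3,2,2\}$).

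The genuine gap is in the second half, and it is not merely that the key computation is deferred: your extrapolated tie-breaker contradicts the very statement you are trying to prove. From the single data point $880-879=16-15$ on $C_7$ you infer that the restraint-dependent part of $a_{n-4}$ is an $n$-only constant plus $\sum_{i<j}\ell_i\ell_j$, whence the winner is the \emph{balanced} arc multiset. But one data point is consistent with infinitely many symmetric functionals of $(\ell_1,\ell_2,\ell_3)$, and the conjectured $r^*$ is \emph{not} balanced once $n\ge 11$. Indeed, for $n\equiv 3\pmod 4$ the definition of $r^*$ gives colour classes (equivalently $D$-arcs) of sizes
\[
\left\{\tfrac{n-1}{2},\ \tfrac{n+1}{4},\ \tfrac{n+1}{4}\right\},
\]
i.e., one arc of the \emph{maximum} admissible length. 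For $n=7$ this is $\{3,2,2\}$, which coincidentally equals the balanced partition — which is why the $C_7$ data cannot distinguish the two hypotheses — but for $n=11$ the conjecture asserts $\{5,3,3\}$ (the pattern $1,2,1,2,1,3,2,3,2,3,2$), whereas your functional $\sum_{i<j}\ell_i\ell_j$ strictly prefers the admissible balanced candidate $\{4,4,3\}$ ($\,37$ versus $39$). So either your hypothesized form of $a_{n-4}$ is wrong, or the conjecture is false; in either case the proposal as written cannot be completed into a proof of the stated conjecture. To repair it you would have to actually carry out one of your two routes (extend Theorem~\ref{restcoefn-3} one coefficient further, or a transfer-matrix computation of $\pi_r(C_n,x)$ for three-arc restraints), determine the true symmetric functional $F(\ell_1,\ell_2,\ell_3)$ — testing it at $n=11$ against both $\{5,3,3\}$ and $\{4,4,3\}$ before trusting any closed form — and also address the possibility that ties at $a_{n-4}$ persist and deeper coefficients are needed, as well as the parity defect you already noticed: the index sets defining $r^*$ are not even well formed for $n\equiv 1\pmod 4$ as written.
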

 
 In \cite{ayselthesis}, a formula for the fifth coefficient of the restrained chromatic polynomial of a graph with girth at least $5$ was given. We believe that this formula can be used to determine $R_{\operatorname{max}}(C_n,k)$ for every $k\geq 1$.
 
 \begin{question} If $C_n$ is  an odd cycle and $k\geq 1$, then what is $R_{\operatorname{max}}(C_n,k)$?
 \end{question}

\begin{figure}[h]
\begin{center}
\includegraphics[scale=0.3]{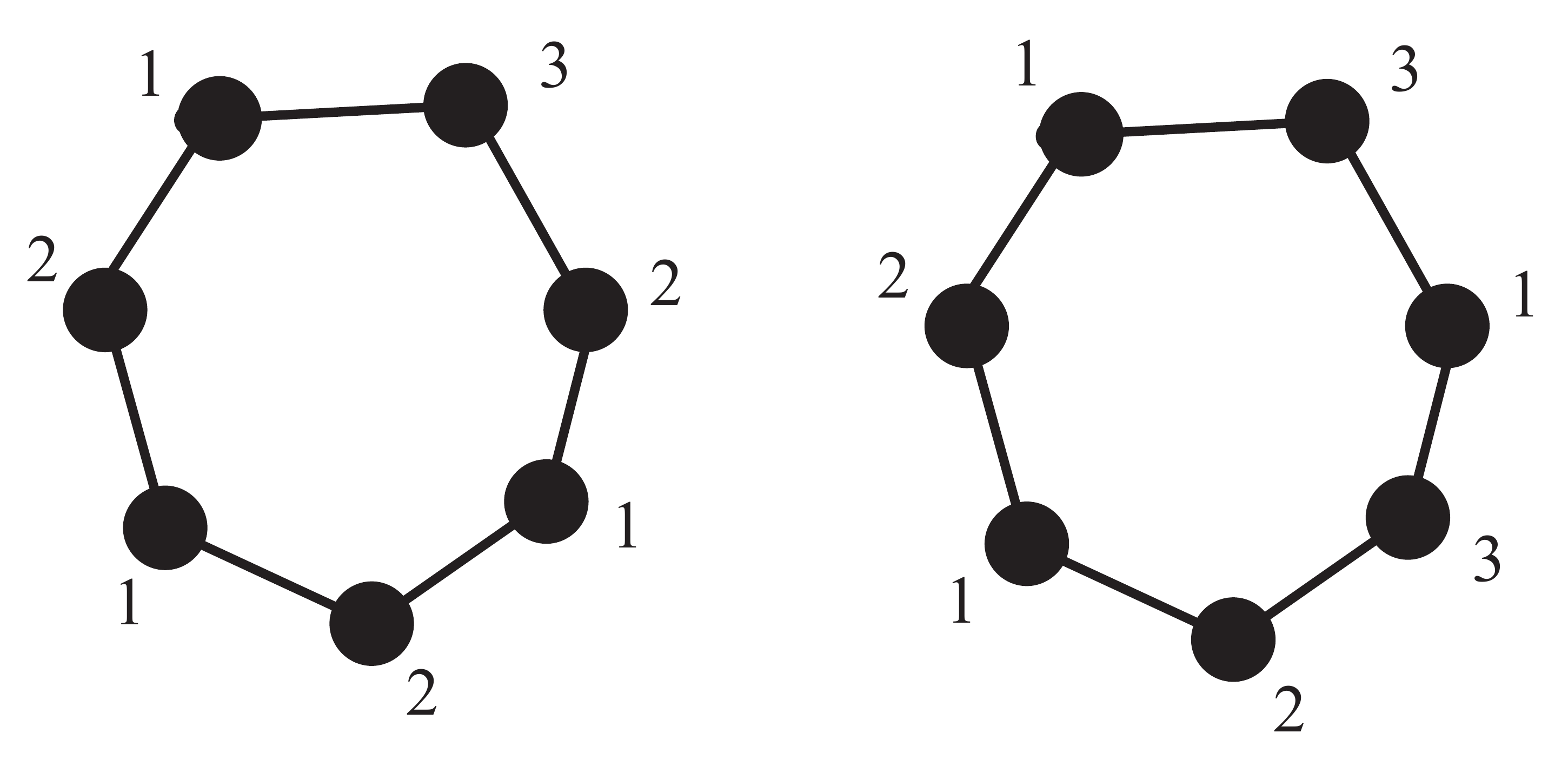}
\end{center}
\caption{Two nonequivalent simple restraints on a cycle graph: $r_1=[\{1\},\{2\},\{1\},\{2\},\{1\},\{2\},\{3\}]$ (left) and  $r_2=[\{1\},\{2\},\{1\},\{2\},\{3\},\{1\},\{3\}]$ (right).}
\label{C7figure}
\end{figure}
\bigskip

In Theorem~\ref{constantrestraint}, we have seen that $R_{\operatorname{min}}(G,k)$ consists of a unique (up to equivalence) $k$-restraint. How about $R_{\operatorname{max}}(G,k)$? We showed that when $G$ is a connected bipartite graph, $R_{\operatorname{max}}(G,k)$ consists of a unique (up to equivalence) $k$-restraint. Does $R_{\operatorname{max}}(G,k)$ consist of a unique (up to equivalence) $k$-restraint for all connected graphs $G$? Or is it possible that $R_{\operatorname{max}}(G,k)$ can contain two nonequivalent restraints for some connected graphs $G$? Note that  there exist graphs for which two nonequivalent restraints permit the same number of colourings. For example, consider the graph $P_4$ with $V(P_4)=\{v_1,v_2,v_3,v_4\}$ and $E(P_4)=\{v_iv_{i+1}\, | \,1\leq i\leq 3 \}$. It is trivial that $r=[\{1\},\{2\},\{2\},\{1\}]$ and $r'=[\{1\},\{2\},\{3\},\{3\}]$ are two nonequivalent restraints on $P_4$. However,
$$\pi_r(P_4,x)=\pi_{r'}(P_4,x)=x^4-7\,x^3+20\,x^2-28\,x+16$$
for all large enough $x$.

Lastly, we proved our results for large enough $x$ but we cannot tell how large $x$ is. So it remains open to determine
 how large $x$ needs to be.

\vskip0.4in
\noindent {\bf \large Acknowledgments:} This research was partially supported by a grant from NSERC. 

\bibliographystyle{elsarticle-num}
\bibliography{<your-bib-database>}

\newpage

\begin{appendix}

\section{Maple program to calculate the restrained chromatic polynomial of a graph}

\begin{lstlisting}
with(GraphTheory):

restchrompoly := proc (G, lst) 
local E, p, i, j, e1, e2, s, e, H, F, p1, p2, V; 
E := Edges(G); 
if E = {} then 
     p := 1; 
     for j to nops(lst) do 
          p := p*(x-nops(lst[j])): 
     od; 
     RETURN(p)
fi; 
V := Vertices(G); 
e := E[1]; 
H := CopyGraph(G); 
DeleteEdge(H, {e}); 
e1 := e[1]; 
e2 := e[2]; 
for i to nops(V) do 
     if V[i] = e1 then 
          p1 := i;
     fi: 
     if V[i] = e2 then 
     p2 := i:
     fi: 
od; 
F := Contract(G, {e[1], e[2]});
s := NULL; 
for i to nops(lst) do 
     if i <> p1 and i <> p2 then 
          s := s, lst[i] :
     fi ; 
     if i = p1 then 
          s := s, lst[p1] union lst[p2]:
     fi:
od; 
s := [s]; 
restchrompoly(H, lst)-restchrompoly(F, s);
end;
\end{lstlisting}

\end{appendix}

\end{document}